\documentclass[11 pt]{amsart}

\usepackage[english]{babel}
\usepackage{amssymb}
\usepackage{amsfonts}
\usepackage{amsmath}
\usepackage{amsthm}
\usepackage{epsfig}
\usepackage{color}
\usepackage{amscd}
\usepackage[all]{xy}
\usepackage{hyperref}
\usepackage{placeins}
\usepackage{caption}


\newtheorem{lemma}{Lemma}[section]
\newtheorem{thm}[lemma]{Theorem}

\newtheorem{cor}[lemma]{Corollary}

\theoremstyle{definition}

\theoremstyle{remark}

\newtheoremstyle{letter}{}{}{\itshape}{}{\bfseries}{.}{.5em}{#1\thmnote{ #3}}
\theoremstyle{letter}


\DeclareMathOperator{\sech}{sech}


\usepackage[letterpaper,left=0.9in,right=0.9in,top=0.95in,bottom=0.95in]{geometry}
\vbadness 10000
\hbadness 10000


\DeclareGraphicsRule{.pdftex}{pdf}{.pdftex}{}


\author{\tiny{Maria Dostert}}
\address{\tiny{Royal Institute of Technology (KTH),  Department of Mathematics, SE-100 44 Stockholm, Sweden}}
\email{\tiny{dostert@kth.se}}

\author{\tiny{Alexander Kolpakov}}
\address{\tiny{\parbox{\linewidth}{Institut de Math\'ematiques, Universit\'e de Neuch\^atel, 2000 Neuch\^atel, Suisse/Switzerland \\
Laboratory of combinatorial and geometric structures, Moscow Institute of Physics \\ and Technology, Dolgoprudny, Russia}}}
\email{\tiny{kolpakov.alexander@gmail.com}}

\title[Kissing number in non-Euclidean spaces of constant sectional curvature]{Kissing number in non-Euclidean spaces \\ of constant sectional curvature}


\begin{document}

\begin{abstract}
This paper provides upper and lower bounds on the kissing number of congruent radius $r > 0$ spheres in hyperbolic $\mathbb{H}^n$ and spherical $\mathbb{S}^n$ spaces, for $n\geq 2$.  For that purpose, the kissing number is replaced by the kissing function $\kappa_H(n, r)$, resp. $\kappa_S(n, r)$, which depends on the dimension $n$ and the radius $r$.

After we obtain some theoretical upper and lower bounds for $\kappa_H(n, r)$, we study their asymptotic behaviour and show, in particular, that $\kappa_H(n,r) \sim (n-1) \cdot d_{n-1} \cdot B(\frac{n-1}{2}, \frac{1}{2}) \cdot e^{(n-1) r}$, where $d_n$ is the sphere packing density in $\mathbb{R}^n$, and $B$ is the beta-function.  Then we produce numeric upper bounds by solving a suitable semidefinite program, as well as lower bounds coming from concrete spherical codes. A similar approach allows us to locate the values of $\kappa_S(n, r)$, for $n= 3,\, 4$, over subintervals in $[0, \pi]$ with relatively high accuracy. \\

\noindent
\textit{Key words: } hyperbolic geometry, spherical geometry, kissing number, semidefinite programming.\\
\noindent
\textit{2010 AMS Classification: } Primary: 05B40. Secondary: 52C17, 51M09. \\
\end{abstract}

\maketitle
\vspace*{-0.3in}

\section{Introduction}
The kissing number $\kappa(n)$ is the maximal number of unit spheres that can simultaneously touch a central unit sphere in $n$-dimensional Euclidean space $\mathbb{R}^n$ without pairwise overlapping.  The research on the kissing number leads back to $1694$, when Isaac Newton and David Gregory had a discussion whether $\kappa(3)$ is equal to $12$ or $13$ \cite{Casselman}.

The exact value of $\kappa(n)$ is only known for  $n = 1, 2, 3, 4, 8, 24$, whereas for $n = 1, 2$ the problem is trivial. In 1953, Sch\"utte and van der Waerden proved that $\kappa(3) = 12$ \cite{SchuetteWaerden}. Furthermore, Delsarte, Goethals, and Seidel \cite{DelsarteGoethalsSeidel} developed a linear programming (LP) bound, which was used by Odlyzko and Sloane \cite{Odlyzko79newbounds} and independently by Levenshtein \cite{Levenshtein} to prove $\kappa(8) = 240$ and $\kappa(24) = 196560$. Later, Musin \cite{Musin} showed that $\kappa(4) = 24$ by using a stronger version of the LP bound. Bachoc and Vallentin \cite{BachocVallentin} strengthened the LP bound further by using a semidefinite program (SDP), which is a generalisation of a linear program. In \cite{MittelmannVallentin}, Mittelmann and Vallentin give a table with the best upper bounds for the kissing number for $n \leq 24$ by using SDP bounds. Moreover, Machado and Oliveira \cite{Machado} improved some of these results, by exploiting polynomial symmetry in the SDP.

The study of the Euclidean kissing number and the contact graphs of kissing configurations in $\mathbb{R}^n$ is still an active area in geometry and optimisation, with a few recent developments cf. \cite{Bezdek1, Bezdek-Reid, DKO, Glazyrin}. 

In this paper, we consider an analogous problem in $n$-dimensional hyperbolic space $\mathbb{H}^n$, as well as in $n$-dimensional spherical space $\mathbb{S}^n$. Some other non-Euclidean geometries, notably the ones on Thurston's list of the eight $3$-dimensional model spaces, have been studied in \cite{Szirmai1, Szirmai2}.

By a sphere of radius $r$ in the hyperbolic space $\mathbb{H}^n$ we mean the set of points at a given geodesic distance $r>0$ from a specific point, which is called the centre of the sphere. Thus, we avoid considering horospheres (i.e. spheres centred at the ideal boundary $\partial \mathbb{H}^n$) and hyperspheres (i.e. geodesic planes dual to the so-called ultra-ideal points of $\mathbb{H}^n$). Packings of $\mathbb{H}^n$ by horospheres, especially in higher  dimensions $n\geq 4$, recently received some attention in \cite{Szirmai3}.

In a kissing configuration every sphere in $\mathbb{H}^n$ has the same radius. Unlike in Euclidean spaces, the kissing number in $\mathbb{H}^n$ depends on the radius $r$, and we denote it by $\kappa_H(n,r)$. By using a purely Euclidean picture of the arrangement of $k$ spheres of radius $r$ in $\mathbb{H}^n$ seen in the Poincar\'e ball model, the kissing number in $\mathbb{H}^n$ coincides with the maximal number of spheres of radius $\frac{1}{2} \left(\tanh\frac{3r}{2} - \tanh\frac{r}{2}\right)$ that can simultaneously touch a central sphere of radius $\tanh\frac{r}{2}$ in $\mathbb{R}^n$ without pairwise intersecting. 

The kissing number in $\mathbb{S}^n$ is denoted by $\kappa_S(n, r)$, and also depends on the radius $r$ of the spheres in the kissing configuration, although here $r$ belongs to the bounded interval $(0, \pi]$. Since $\mathbb{S}^n$ is a compact metric space, $\kappa_S(n, r)$ is a decreasing function of $r$, while  $\kappa_H(n, r)$ increases with $r$ exponentially fast.

In Section \ref{theoreticalbound-H}, we develop some upper and lower bounds for the kissing number in $\mathbb{H}^n$, $n \geq 2$, in order to evaluate its asymptotic behaviour. In Section \ref{sdpbound}, we adapt the SDP by Bachoc and Vallentin to obtain upper bounds for $\kappa_H(n,r)$ and $\kappa_S(n,r)$. In Section \ref{configurations}, concrete kissing configurations are discussed, which provide lower bounds for $\kappa_H(n,r)$. A great deal of them is taken from the spherical codes produced and collected by Hardin, Smith, and Sloane \cite{Sloane-et-al}, which often turn out to be optimal. 

For certain dimensions and radii, we compute the lower bounds by using the results of Section \ref{theoreticalbound-H}, and the upper bounds due to Levenshtein \cite{Levenshtein} and Coxeter \cite{Bor, Coxeter, FT}\footnote{This upper bound was conjectured by Coxeter in \cite{ Coxeter} and, even earlier, by Fejes T\'oth in \cite{FT}. It was proved much later by B\"or\"oczky in \cite{Bor}.}. Then, we compare them with the lower bounds given by concrete kissing configurations and with the SDP upper bounds, respectively. 

In Section \ref{theoreticalbound-S}, some geometric upper and lower bounds are given for the kissing number $\kappa_S(n, r)$ in $\mathbb{S}^n$, $n\geq 2$.  It is easy to see that $\kappa(1) = 2$ for $\mathbb{R}^1$ and $\kappa_H(1, r) = 2$, for all $r>0$, in $\mathbb{H}^1$ (the latter being isometric to $\mathbb{R}^1$). However, already for $\mathbb{S}^1$ we have that $\kappa_S(1,r) = 2$, if $r \leq \frac{\pi}{3}$, $\kappa_S(1,r) = 1$, if $\frac{\pi}{3} < r \leq \pi$, and $\kappa_S(1,r) = 0$, if $\pi < r \leq 2\pi$. In Sections \ref{sdpbound} and \ref{configurations}, we manage to locate the values of $\kappa_S(n, r)$ over subintervals in $[0, \pi]$: for $n = 3$ this follows from the solution of Tammes' problem, and for $n = 4$ we use the same approach to obtain the lower and upper bounds as in hyperbolic space.

All the numerical results are collected in Section~\ref{section:upper-lower-bounds}. Finally, the average kissing number (which happens to coincide in the Euclidean and non-Euclidean settings) is discussed in Section~\ref{section:avg-kissing-number}. 
 
\section*{Acknowledgements}
\noindent
{\small The idea of the project was conceived during the workshop ``Discrete geometry and automorphic forms'' at the American Institute of Mathematics (AIM) in September, 2018. The authors are grateful to the workshop organisers, Henry Cohn (Microsoft Research -- New England) and Maryna Viazovska (EPF Lausanne), all its participants and the AIM administration for creating a unique and stimulating research environment. They would also like to thank Matthew de Courcy--Ireland (EPF Lausanne), Fernando M. de Oliveira (TU Delft), Oleg Musin (University of Texas Rio Grande Valley), and Alexander Barg (University of Maryland) for useful comments and inspiring discussions. M.D. was partially supported by the Wallenberg AI, Autonomous Systems and Software Program (WASP) funded by the Knut and Alice Wallenberg Foundation. A.K. was partially supported by the Swiss National Science Foundation (project no.~PP00P2-170560) and the Russian Federation Government (grant no. 075-15-2019-1926).}

\section{Estimating the kissing number in hyperbolic space}\label{theoreticalbound-H}

In this section we shall produce some upper and lower bounds for the kissing function $\kappa_H(n, r)$, so that we can analyse its asymptotic behaviour in the dimension $n$, and in the radius $r$, for large values of the respective parameters. We refer the reader to \cite[\S~2.1]{Ratcliffe} and \cite[\S~4.5]{Ratcliffe} for all the necessary elementary facts about hyperbolic and spherical geometry, and their models. 

\subsection{Upper bound}

First we prove the following upper bound for $\kappa_H(n, r)$. The principal tool in the proof is passing to an essentially Euclidean setting and then using the classical ``spherical cap'' method, cf. \cite[\S~X.50]{Toth-RegularFigures}.

\begin{thm}\label{thm:upper-bound-h}
For any integer $n \geq 2$ and a non-negative number $r\geq 0$, we have that 
\begin{equation*}
\kappa_H(n, r) \leq \frac{2\, B\left( \frac{n-1}{2}, \frac{1}{2} \right)}{B\left( \frac{\sech^2 r}{4}; \frac{n-1}{2}, \frac{1}{2} \right)},
\end{equation*} 
where $B(x; y, z) = \int^x_0 t^{y-1} (1-t)^{z-1} dt$, for all $x \in [0,1]$ and $y, z > 0$, is the incomplete beta-function, and $B(y, z) = B(1; y, z)$. 
\end{thm}
\begin{proof}
In the proof we shall use a purely Euclidean picture of the arrangement of $k = \kappa_H(n, r)$ spheres in the hyperbolic space $\mathbb{H}^n$ seen in the 
Poincar\'e ball model, c.f. \cite[\S 4.5]{Ratcliffe}. A part of the kissing configuration looks (through the Euclidean eyes) as depicted in Fig.~\ref{fig:spheres1}. Let $S_0$ be the ``large'' sphere centred at $O$ of Euclidean radius $r_1 = \tanh \frac{r}{2}$ that is in the kissing configuration with $k$ ``small'' spheres $S_1$, $\dots$, $S_k$ of equal Euclidean radii $r_2 = \frac{1}{2} \left(\tanh\frac{3r}{2} - \tanh\frac{r}{2}\right)$. 

\begin{figure}[h]
\centering
\includegraphics[scale=0.4]{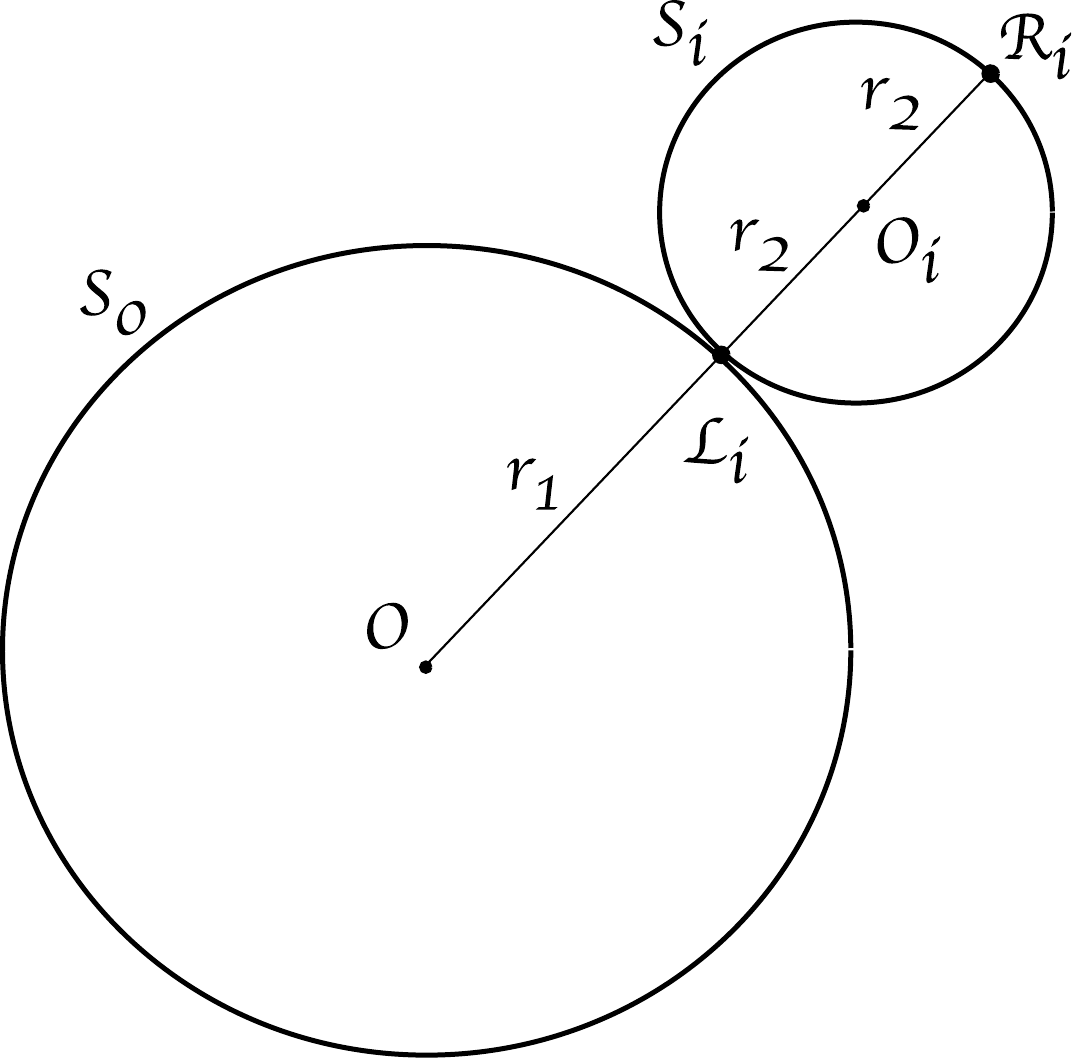}
\caption{A configuration of kissing hyperbolic spheres viewed as Euclidean ones.}\label{fig:spheres1}
\end{figure}

Indeed, the Euclidean distance from the centre $O$ of the ball model to any point at hyperbolic distance $r$ from $O$ equals $\tanh \frac{r}{2}$. If $O_i$ is the Euclidean centre\footnote{The hyperbolic centre of $S_i$ appears displaced towards the boundary of the ball model, as compared to $O_i$, and this is the reason for performing our computation of $r_1$ and $r_2$ below in a slightly peculiar way.} of any of the ``small'' spheres $S_i$, $i = 1, \dots, k$, then the hyperbolic distance between $O$ and the point of tangency $L_i$ between $S_0$ and $S_i$ is $r$, and it corresponds to the Euclidean radius of $S_0$, which equals $r_1$. The hyperbolic distance between $O$ and the point $R_i$ on the diameter of $S_i$ opposite to $L_i$ is $r + 2 r = 3 r$, while the corresponding Euclidean distance is $r_1 + 2 r_2$. Thus, we obtain $r_1 = \tanh \frac{r}{2}$, and $r_1 + 2 r_2 = \tanh\frac{3r}{2}$, from which the above values of $r_1$ and $r_2$ can be found. 

Choose one of the small spheres $S_i$, $i = 1, \dots, k$, with Euclidean centre $O_i$, and project it onto $S_0$ along the rays emanating from $O$: such a projection will be a spherical cap $C_i$ on $S_0$ of angular radius $\theta$ between a tangent line $OT_i$ to $S_i$ (with $T_i$ the point of tangency to $S_i$) and the line $OO_i$ connecting the centres of $S_0$ and $S_i$ (so that $OO_iT_i$ is a right triangle, with $\measuredangle OT_iO_i = \frac{\pi}{2}$). The area of such a cap is given by the formula 
\begin{equation*}
\mathrm{Area}\, C_i = \frac{1}{2}\cdot \mathrm{Area}\, S_0 \cdot \frac{B\left( \sin^2 \theta; \frac{n-1}{2}, \frac{1}{2} \right)}{B( \frac{n-1}{2}, \frac{1}{2} )}.
\end{equation*}
If we consider $S_0$ with the standard angular metric, then $\theta$ equals the radius of $C_i$, which can be interpreted as a spherical circle, i.e. the set of all points at angular distance $\theta$ from the centre $O'_i = OO_i \cap S_0$.

Since the points $O$, $O_i$ and $T_i$ are situated in a plane, a standard trigonometric computation yields (see Fig.~\ref{fig:triangle1})
\begin{equation*}
\sin \theta = \frac{r_2}{r_1 + r_2},
\end{equation*}
where $r_1 = \tanh \frac{r}{2}$ and $r_2 = \frac{1}{2} \left(\tanh\frac{3r}{2} - \tanh\frac{r}{2}\right)$, as given above. By substituting all of these quantities and simplifying the resulting formula, we obtain
\begin{equation*}
\sin \theta = \frac{\sech r}{2}. 
\end{equation*}

\begin{figure}[h]
\centering
\includegraphics[scale=0.45]{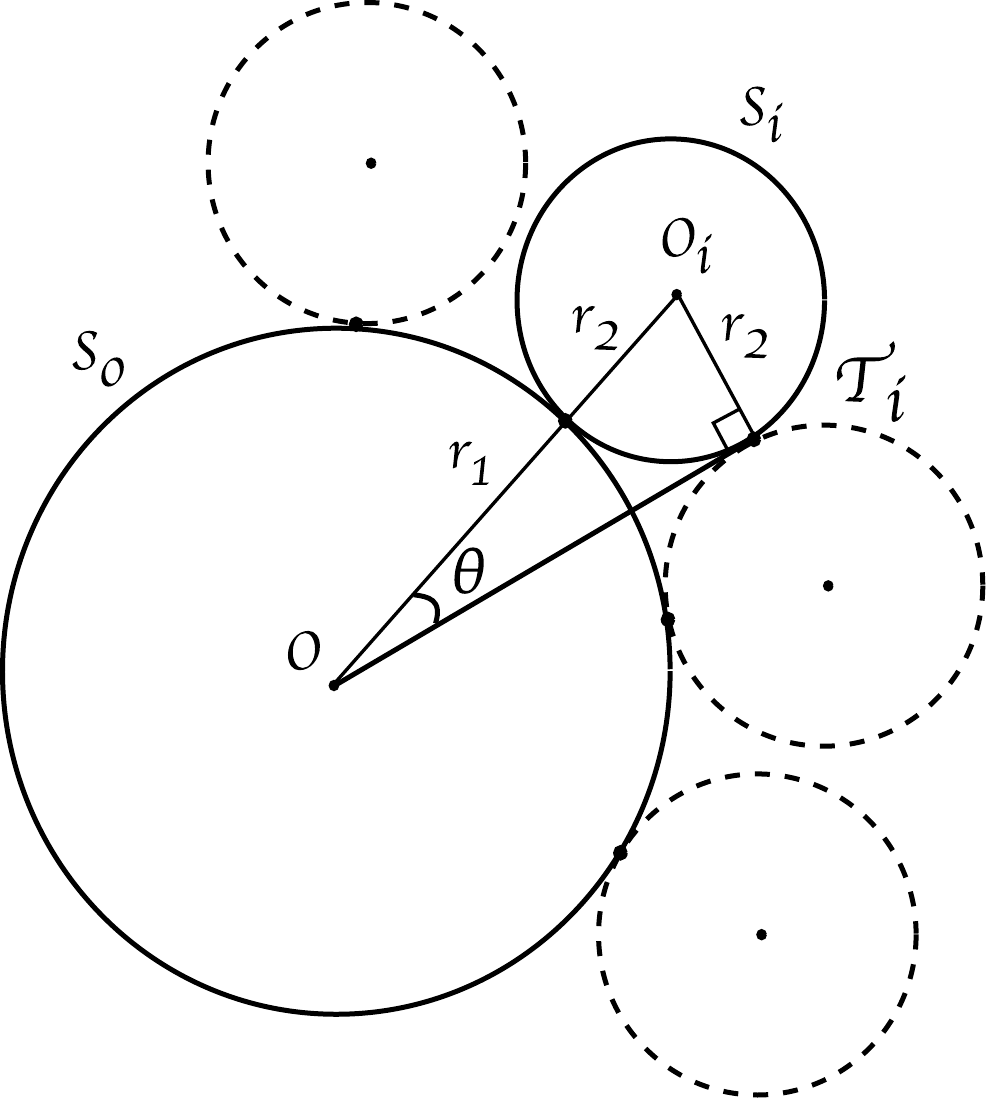}
\caption{The section of a kissing configuration by the $OO_iT_i$ plane.}\label{fig:triangle1} 
\end{figure}

Since the caps $C_i$ are mutually congruent and they pack the sphere $S_0$, we have that 
\begin{equation*}
\kappa_H(n, r)\cdot \mathrm{Area}\, C_i = \sum^k_{i=1} \mathrm{Area}\, C_i \leq \mathrm{Area}\, S_0,
\end{equation*}
and the theorem follows.
\end{proof}

\subsection{Lower bound}

Before providing a lower bound for $\kappa_H(n, r)$, let us first formulate two simple but crucial observations. The first one concerns a relation between packing and covering of the $n$-dimensional unit sphere $\mathbb{S}^n$ by closed metric balls. A packing of $\mathbb{S}^n$ by closed metric balls of angular radius $r>0$ with non-intersecting interiors is called \textit{maximal} if it cannot be enlarged by adding more such balls without overlapping their interiors.  

\begin{lemma}\label{lemma:pack-to-cover}
Let $\mathbb{S}^n$ be packed by closed metric balls $B_i$, $i=1, 2, \dots$, of equal (angular) radius $r$, and let such packing be maximal. Then $\mathbb{S}^n$ is covered by closed metric balls $B^\prime_i$, $i = 1, 2, \dots$, concentric to $B_i$, of radius $2 r$.
\end{lemma}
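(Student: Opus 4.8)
The plan is to argue by contradiction, exploiting the maximality of the packing. Suppose the balls $B^\prime_i$ of radius $2r$ do \emph{not} cover $\mathbb{S}^n$. Then there is a point $p \in \mathbb{S}^n$ that lies in no $B^\prime_i$, which means the angular distance $d(p, c_i)$ from $p$ to the centre $c_i$ of each ball exceeds $2r$. The goal is to use $p$ as the centre of a new ball of radius $r$ that can be added to the packing without overlapping any of the existing ones, contradicting maximality.

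The key step is the triangle-inequality estimate on $\mathbb{S}^n$: if $B(p, r)$ and $B(c_i, r)$ had intersecting interiors, then there would be a point $q$ with $d(p,q) < r$ and $d(q, c_i) < r$, whence $d(p, c_i) \le d(p,q) + d(q,c_i) < 2r$, contrary to the choice of $p$. (One should note that the spherical triangle inequality holds for angular distances bounded by $\pi$, and since all the radii involved here are at most $2r$ with $r$ small enough that $2r < \pi$ — or, if $2r \ge \pi$, the covering statement is vacuous because balls of radius $\ge \pi/2$ already cover $\mathbb{S}^n$ — this causes no trouble.) Hence $B(p, r)$ has interior disjoint from every $B_i$, so adjoining it enlarges the packing, contradicting maximality. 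Therefore every point of $\mathbb{S}^n$ lies in some $B^\prime_i$, i.e. the $B^\prime_i$ cover $\mathbb{S}^n$.

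I expect the only real subtlety to be the degenerate range of $r$: the triangle inequality on the sphere is only the naive one when the points are not too far apart, so one wants to dispose of the case $2r \ge \pi$ separately (where the statement is trivially true, or one restricts attention to the geometrically meaningful regime $r < \pi/2$ relevant to the application). A secondary point worth a sentence is that "packed" should be read as the closed balls having pairwise disjoint interiors, so that tangency is allowed; with that convention the argument above produces a genuine enlargement. Everything else is a one-line application of the triangle inequality, so the write-up will be short.
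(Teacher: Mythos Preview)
Your proposal is correct and takes essentially the same approach as the paper, which phrases the argument directly rather than by contradiction: for any $x$, maximality forces the distance from $x$ to $\bigcup_i B_i$ to be less than $r$, hence $x$ lies within $2r$ of some centre. Your worry about the spherical triangle inequality is unnecessary, since the angular distance on $\mathbb{S}^n$ is a genuine metric and the inequality $d(p,c_i)\le d(p,q)+d(q,c_i)$ holds without any restriction on the points.
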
 
\begin{proof}
Let $x_i$ be the centre of $B_i$, $i=1, 2, \dots$. For $x\in X$ let $\rho(x)$ be the shortest angular distance from $x$ to the set $\cup_i B_i$. Since $B_i$'s form a maximal packing, we have that $\rho(x) < r$. Thus, $x$ is at distance $\rho(x)$ from some $B_j$, which implies that $x$ is at distance $\rho(x) + r < 2r$ from its centre $x_j$. The latter means that $x \in B^\prime_j$.
\end{proof}

\begin{lemma}\label{lemma:caps-max-packing}
The packing of $S_0$ by the spherical caps $C_i$, $i = 1, 2, \dots, k$, from Theorem~\ref{thm:upper-bound-h}  is maximal, if $k = \kappa_H(n, r)$.
\end{lemma}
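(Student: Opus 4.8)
The plan is to argue by contradiction, exploiting the precise geometric dictionary established in the proof of Theorem~\ref{thm:upper-bound-h} between the kissing configuration in $\mathbb{H}^n$ and the cap packing of $S_0$. Suppose the packing of $S_0$ by the caps $C_i$, $i = 1, \dots, k$, with $k = \kappa_H(n, r)$, were \emph{not} maximal. Then there would be room for an additional cap $C_{k+1}$ of the same angular radius $\theta$, with $\sin\theta = \frac{\sech r}{2}$, centred at some point $O'_{k+1} \in S_0$, whose interior is disjoint from the interiors of all the $C_i$. The goal is to reverse-engineer from this extra cap a $(k+1)$-st small sphere $S_{k+1}$ of Euclidean radius $r_2 = \frac{1}{2}\left(\tanh\frac{3r}{2} - \tanh\frac{r}{2}\right)$ that kisses $S_0$ and is disjoint (in its interior) from all of $S_1, \dots, S_k$, contradicting the maximality of $k = \kappa_H(n, r)$.

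First I would reconstruct the sphere $S_{k+1}$ from the cap $C_{k+1}$: let $L_{k+1} = O'_{k+1}$ be the prescribed point of tangency on $S_0$, and place $O_{k+1}$ on the ray $\overrightarrow{O L_{k+1}}$ at Euclidean distance $r_1 + r_2$ from $O$, so that $S_{k+1}$, the Euclidean sphere of radius $r_2$ about $O_{k+1}$, is externally tangent to $S_0$ at $L_{k+1}$. By the same right-triangle trigonometry used in the theorem (the triangle $O O_{k+1} T_{k+1}$ with the right angle at the tangency point $T_{k+1}$), the radial projection of $S_{k+1}$ onto $S_0$ from $O$ is exactly the cap of angular radius $\theta$ centred at $O'_{k+1}$, i.e. $C_{k+1}$. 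The key step is then to translate disjointness of caps into disjointness of spheres: I claim that for two small spheres $S_i, S_j$ of equal radius $r_2$, both externally tangent to $S_0$, the interiors of $S_i$ and $S_j$ are disjoint \emph{if and only if} the interiors of their radial-projection caps $C_i, C_j$ on $S_0$ are disjoint. This equivalence is what makes the dictionary a genuine bijection rather than merely one inequality; both spheres lie at the same "height" over $S_0$ (their centres are at common distance $r_1 + r_2$ from $O$), so the angular separation of their tangency points controls their mutual distance monotonically, and the threshold angular separation at which $S_i$ and $S_j$ first touch is precisely $2\theta$ — exactly the threshold at which the caps $C_i$ and $C_j$ first overlap. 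One must also note that in the Poincaré ball model two hyperbolic spheres have disjoint interiors precisely when the corresponding Euclidean spheres do (the model is conformal and disks map to disks), so a $(k+1)$-st non-overlapping Euclidean small sphere yields a genuine $(k+1)$-st hyperbolic sphere kissing $S_0$.

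Granting this equivalence, the contradiction is immediate: a cap $C_{k+1}$ with interior disjoint from all $C_i$, $i \le k$, produces a sphere $S_{k+1}$ with interior disjoint from all $S_i$, $i \le k$, and tangent to $S_0$, so $S_0, S_1, \dots, S_{k+1}$ is a valid kissing configuration of $k+1$ congruent hyperbolic spheres, contradicting $k = \kappa_H(n, r)$. The main obstacle I anticipate is the "height" subtlety flagged in the theorem's footnote: the \emph{hyperbolic} centre of a small sphere is displaced outward from its Euclidean centre $O_i$, so the small spheres are genuinely congruent as hyperbolic objects even though care is needed to see that the relevant Euclidean data ($r_2$, and the distance $r_1 + r_2$ from $O$ to $O_i$) are the same for all of them, including the hypothetical $S_{k+1}$. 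Once it is clear that every small sphere in any kissing configuration with $S_0$ must have this exact Euclidean radius and this exact center-distance from $O$ — which follows verbatim from the computation $r_1 = \tanh\frac{r}{2}$, $r_1 + 2r_2 = \tanh\frac{3r}{2}$ in the proof of Theorem~\ref{thm:upper-bound-h} — the monotone relationship between angular separation of tangency points and Euclidean distance between sphere centers is an elementary law-of-cosines computation, and the lemma follows.
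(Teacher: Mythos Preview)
Your proposal is correct and follows essentially the same approach as the paper: both arguments hinge on the equivalence ``$S_i$ and $S_j$ have disjoint interiors if and only if $C_i$ and $C_j$ do,'' and then derive a contradiction from an extra cap by reconstructing an extra small sphere. The only minor difference is in how that equivalence is justified: the paper slices by the $2$-plane through $O$, $O_i$, $O_j$ and reduces to a statement about circles and arcs, whereas you argue directly via the common centre-distance $r_1+r_2$ and a law-of-cosines monotonicity with threshold $2\theta$; both justifications are elementary and equivalent.
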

\begin{proof}
Let us consider two small spheres $S_i$ and $S_j$ tangent to $S_0$, and let $C_i$ and $C_j$ be the corresponding caps on $S_0$. Let $P$ be a two-dimensional plane through $O$, $O_i$, and $O_j$. Let $s_0 = S_0 \cap P$ be a circle of radius $r_1$ in the plane $P$, while $s_i = S_i \cap P$ and $s_j = S_j \cap P$ be the two outer circles of radius $r_2$ tangent to $s_0$. Let also $c_i = C_i\cap P$ and $c_j = C_j\cap P$. Observe that $s_i$ and $s_j$ intersect if and only if $c_i$ and $c_j$ intersect, for all $1 \leq i, j \leq k$. This is equivalent to $S_i$ intersecting $S_j$ if and only if $C_i$ intersects $C_j$.

After placing an additional cap $C_{k+1}$ (congruent to any of the already existing $C_i$'s) on $S_0$, we can create a sphere $S_{k+1}$ (congruent to any of $S_i$'s) producing $C_i$ as its central projection onto $S_0$.  By assumption, $k = \kappa_H(n, r)$, and thus there exists $S_l$ such that $S_{k+1}$ and $S_l$ intersect. Then, $C_{k+1}$ and $C_l$ also intersect, and thus the packing of $S_0$ by $C_i$'s is indeed maximal.
\end{proof}

Now we can formulate and prove a lower bound for $\kappa_H(n, r)$. 

\begin{thm}\label{thm:lower-bound-h}
For any integer $n \geq 2$ and a non-negative number $r\geq 0$, we have that 
\begin{equation*}
\kappa_H(n, r) \geq \frac{2\, B\left( \frac{n-1}{2}, \frac{1}{2} \right)}{B\left(\sech^2 r - \frac{\sech^4 r}{4}; \frac{n-1}{2}, \frac{1}{2} \right)},
\end{equation*} 
where $B(x; y, z) = \int^x_0 t^{y-1} (1-t)^{z-1} dt$, for all $x \in [0,1]$ and $y, z > 0$, is the incomplete beta-function, and $B(y, z) = B(1; y, z)$.
\end{thm}
\begin{proof}
By Lemma \ref{lemma:caps-max-packing}, the packing of $S_0$ by $k = \kappa_H(n, r)$ spherical caps $C_i$ produced in the proof of Theorem~\ref{thm:upper-bound-h} is maximal. Let $C^\prime_i$ be a spherical cap concentric to $C_i$ of angular radius $2\theta$. Since $X = S_0$ with angular metric on it can be thought of as a rescaled unit sphere $\mathbb{S}^{n-1}$, in which each $C_i$ is a closed metric ball of radius $\theta$, and each $C^\prime_i$ is a closed metric ball of radius $2 \theta$, we obtain that $C^\prime_i$'s cover $S_0$ by Lemma~\ref{lemma:pack-to-cover}. 

Then,
\begin{equation*}
\kappa_H(n, r)\cdot \mathrm{Area}\, C^\prime_i = \sum^k_{i=1} \mathrm{Area}\, C^\prime_i \geq \mathrm{Area}\, S_0,
\end{equation*}
where 
\begin{equation*}
\mathrm{Area}\, C^\prime_i = \frac{1}{2}\cdot \mathrm{Area}\, S_0 \cdot \frac{B\left( \sin^2(2 \theta); \frac{n-1}{2}, \frac{1}{2} \right)}{B( \frac{n-1}{2}, \frac{1}{2} )},
\end{equation*}
with $\theta$ such that
\begin{equation*}
\sin \theta = \frac{\sech r}{2}.
\end{equation*}
By using the formula $\sin(2 \theta) = 2 \sin \theta \cos \theta$, one can express $\sin^2(2\theta)$ through $\sin^2\theta$, and the theorem follows.
\end{proof}

\subsection{Euclidean kissing numbers}

The estimates of Theorems~\ref{thm:upper-bound-h} and \ref{thm:lower-bound-h} can be easily adapted to the case of the Euclidean kissing number, i.e. the kissing number of spheres of unit radii in $\mathbb{R}^n$, for $n\geq 2$. To this end, one needs just to set $\theta = \frac{\pi}{3}$ and $r = 0$ in the preceding argument.

\begin{cor}\label{cor:euclidean-k}
For the kissing number $\kappa(n)$ of unit radius spheres in $\mathbb{R}^n$, we have that
\begin{equation*}
\frac{2\, B\left( \frac{n-1}{2}, \frac{1}{2} \right)}{B\left( \frac{3}{4}; \frac{n-1}{2}, \frac{1}{2} \right)} \leq \kappa(n) \leq \frac{2\, B\left( \frac{n-1}{2}, \frac{1}{2} \right)}{B\left( \frac{1}{4}; \frac{n-1}{2}, \frac{1}{2} \right)}.
\end{equation*}
\end{cor}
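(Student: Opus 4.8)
The plan is to rerun the arguments of Theorems~\ref{thm:upper-bound-h} and~\ref{thm:lower-bound-h} directly in $\matR^n$, where the spheres no longer degenerate; formally ``setting $r = 0$'' just means carrying through the same geometry with genuine Euclidean unit spheres. Concretely, let $S_0$ be the central unit sphere centred at $O$, and let $S_1, \dots, S_k$, $k = \kappa(n)$, be unit spheres kissing $S_0$. Radially projecting each $S_i$ from $O$ onto $S_0$ produces a spherical cap $C_i$ of angular radius $\theta$; since $|OO_i| = 2$ and $S_i$ has radius $1$, the right triangle $OO_iT_i$ gives $\sin\theta = \tfrac12$, hence $\theta = \tfrac{\pi}{6}$ and $\sin^2\theta = \tfrac14$. (This is exactly the specialisation $\sech r = 1$ of the identity $\sin\theta = \tfrac{\sech r}{2}$ from the proof of Theorem~\ref{thm:upper-bound-h}.)

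First I would reprove the content of Lemma~\ref{lemma:caps-max-packing}: cutting the configuration by the plane through $O$, $O_i$, $O_j$ shows that $C_i$ and $C_j$ have intersecting interiors if and only if $S_i$ and $S_j$ do, so the caps $C_i$ form a packing of $S_0$. Comparing areas with the cap-area formula used in Theorem~\ref{thm:upper-bound-h},
\begin{equation*}
\kappa(n)\cdot \tfrac12 \Area\, S_0 \cdot \frac{B\!\left(\tfrac14;\tfrac{n-1}{2},\tfrac12\right)}{B\!\left(\tfrac{n-1}{2},\tfrac12\right)} \;=\; \sum_{i=1}^{k}\Area\, C_i \;\leq\; \Area\, S_0,
\end{equation*}
and rearranging gives the upper bound.

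For the lower bound I would note, again as in Lemma~\ref{lemma:caps-max-packing}, that for $k = \kappa(n)$ this cap packing is \emph{maximal}: an additional cap $C_{k+1}$ lifts to a unit sphere $S_{k+1}$ kissing $S_0$, which by maximality of the kissing configuration meets some $S_l$, whence $C_{k+1}$ meets $C_l$. Viewing $S_0$ as a rescaled $\matS^{n-1}$ in which each $C_i$ is a ball of radius $\theta$, Lemma~\ref{lemma:pack-to-cover} then shows the concentric caps $C_i'$ of angular radius $2\theta = \tfrac{\pi}{3}$ cover $S_0$. Since $\sin^2(2\theta) = \sin^2\tfrac{\pi}{3} = \tfrac34$, the area formula yields
\begin{equation*}
\kappa(n)\cdot \tfrac12 \Area\, S_0 \cdot \frac{B\!\left(\tfrac34;\tfrac{n-1}{2},\tfrac12\right)}{B\!\left(\tfrac{n-1}{2},\tfrac12\right)} \;=\; \sum_{i=1}^{k}\Area\, C_i' \;\geq\; \Area\, S_0,
\end{equation*}
and rearranging gives the lower bound.

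I do not expect any genuine obstacle: the trigonometry, the equivalence of intersection patterns under radial projection, the maximality of a kissing configuration, and Lemmas~\ref{lemma:pack-to-cover}--\ref{lemma:caps-max-packing} all transfer verbatim, with the sole change being $\sech r \mapsto 1$. The only subtlety worth a remark is that the hyperbolic proofs ran inside the Poincar\'e ball model, where radius-$r$ spheres collapse to points as $r \to 0$; so one should restate the (identical) argument with honest Euclidean unit spheres rather than literally substituting $r = 0$ into Theorems~\ref{thm:upper-bound-h}--\ref{thm:lower-bound-h}. (Since both bounds there are continuous at $r = 0$, taking $r \to 0^{+}$ also works, but is less transparent.)
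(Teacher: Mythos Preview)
Your proposal is correct and follows exactly the paper's own approach: the paper simply remarks that Theorems~\ref{thm:upper-bound-h} and~\ref{thm:lower-bound-h} are adapted by setting $r=0$ (equivalently $\sech r = 1$), which is precisely what you carry out in detail. Your observation that one should rerun the argument with genuine Euclidean unit spheres rather than literally substituting $r=0$ into the Poincar\'e-ball picture is a worthwhile clarification, and your value $\theta = \pi/6$ for the cap radius is the correct one (the paper's text ``$\theta = \pi/3$'' evidently refers to the doubled radius $2\theta$ used in the covering step).
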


The upper bound in the above theorem is identical to the one obtained by Glazyrin \cite[Theorem~6]{Glazyrin} albeit in a different context. Namely, the work \cite{Glazyrin} studies the average degree of contact graphs for kissing spheres having arbitrary radii. It is worth mentioning that in this case we shall have exactly the same estimates for hyperbolic and Euclidean spaces, since by allowing varying radii we effectively avoid the ambient metric influencing the combinatorics of kissing configurations\footnote{This is true since spheres in $\mathbb{H}^n$ (given by Poincar\'e's ball model) can be also described as spheres in $\mathbb{R}^n$ (i.e. by quadratic equations), and vice versa.}. 

Also, some asymptotic behaviour for the Euclidean kissing number $\kappa(n)$, as $n\rightarrow \infty$, can be deduced from Corollary~\ref{cor:euclidean-k}. 
However, the upper asymptotic bound in this case will be much poorer than $2^{0.401414(n+o(1))}$ by Kabatiansky and Levenshtein \cite{KL}. The lower asymptotic bound will be identical to the classical one by Chabauty \cite{C}, Shannon \cite{Shannon}, and Wyner \cite[\S 5.1]{W}, and is thus poorer than the recent bound by Jenssen, Joos, and Perkins \cite{JJP}. 

\subsection{Asymptotic behaviour}

First of all, we establish the following fact, which can be expressed simply by saying that the kissing number $\kappa_H(n, r)$ in $\mathbb{H}^n$, for any fixed dimension $n \geq 2$, grows exponentially fast with the radius $r$. Note, that the exponential asymptotic behaviour of the kissing number $\kappa_H(2, r)$, as $r \rightarrow \infty$, follows readily from the work by Bowen \cite{B}. 

\begin{cor}\label{corollary:asymptotic-k-H}
As $r \to \infty$, the following asymptotic formula holds:
\begin{equation*}
\kappa_H(n, r) \sim (n-1) \, d_{n-1} \, B\left( \frac{n-1}{2}, \frac{1}{2} \right)\, e^{(n-1) r},
\end{equation*}
where $d_n$ is the sphere packing density in $\mathbb{R}^n$.
\end{cor}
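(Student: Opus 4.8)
The plan is to identify $\kappa_H(n,r)$ \emph{exactly} with a spherical cap–packing number on $\mathbb{S}^{n-1}$ whose cap radius shrinks to $0$ as $r\to\infty$, and then to substitute the Euclidean sphere packing density in the limit. The starting point is that the proofs of Theorem~\ref{thm:upper-bound-h} and Lemma~\ref{lemma:caps-max-packing} actually produce a bijection, not merely the inequalities stated there: the assignment $S_i\mapsto C_i$, together with the equivalence ``$S_i$ meets $S_j$ iff $C_i$ meets $C_j$'' and the construction in the proof of Lemma~\ref{lemma:caps-max-packing} recovering a kissing sphere from an arbitrary cap, matches kissing configurations of $m$ hyperbolic spheres of radius $r$ with packings of $S_0$ (with its angular metric, a rescaled $\mathbb{S}^{n-1}$) by $m$ congruent caps of angular radius $\theta=\theta(r)$, where $\sin\theta(r)=\tfrac12\sech r$. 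Hence $\kappa_H(n,r)=N_{n-1}\bigl(\theta(r)\bigr)$, with $N_d(\theta)$ the maximal number of congruent caps of angular radius $\theta$ in a packing of $\mathbb{S}^d$. Writing $\delta_d(\theta)$ for the corresponding optimal density and $A_d(\theta)=\tfrac12\,\mathrm{Area}(\mathbb{S}^d)\,B\!\left(\sin^2\theta;\tfrac d2,\tfrac12\right)\big/B\!\left(\tfrac d2,\tfrac12\right)$ for the area of one cap (as computed in Theorem~\ref{thm:upper-bound-h}), and using the scale-independent identity $\delta_d(\theta)=N_d(\theta)\,A_d(\theta)/\mathrm{Area}(\mathbb{S}^d)$, this becomes
\begin{equation*}
\kappa_H(n,r)=\delta_{n-1}\bigl(\theta(r)\bigr)\cdot\frac{2\,B\!\left(\tfrac{n-1}{2},\tfrac12\right)}{B\!\left(\sin^2\theta(r);\tfrac{n-1}{2},\tfrac12\right)}.
\end{equation*}

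I would then evaluate the right-hand side as $r\to\infty$. From $\sech r=2e^{-r}(1+o(1))$ one gets $\sin\theta(r)=\tfrac12\sech r\to0$, so $\theta(r)\to0$, and $\sin^2\theta(r)=e^{-2r}(1+o(1))$. Since $B(x;\tfrac{n-1}{2},\tfrac12)=\int_0^x t^{(n-3)/2}(1-t)^{-1/2}\,dt\sim\tfrac{2}{\,n-1\,}\,x^{(n-1)/2}$ as $x\to0^+$ (valid for $n\ge2$), a short computation gives $B\!\left(\sin^2\theta(r);\tfrac{n-1}{2},\tfrac12\right)\sim\tfrac{2}{\,n-1\,}e^{-(n-1)r}$, and therefore
\begin{equation*}
\frac{2\,B\!\left(\tfrac{n-1}{2},\tfrac12\right)}{B\!\left(\sin^2\theta(r);\tfrac{n-1}{2},\tfrac12\right)}\sim(n-1)\,B\!\left(\tfrac{n-1}{2},\tfrac12\right)e^{(n-1)r}.
\end{equation*}
So the corollary is equivalent to the purely geometric assertion $\lim_{\theta\to0^+}\delta_{n-1}(\theta)=d_{n-1}$, namely that the densest packing of $\mathbb{S}^{n-1}$ by vanishingly small congruent caps realises in the limit the Euclidean sphere packing density $d_{n-1}$ in dimension $n-1$.

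I expect this last assertion to be the only real obstacle; the incomplete-beta and $\sech$ asymptotics are routine. I would prove it by comparing spherical cap packings with Euclidean sphere packings through the exponential map of $\mathbb{S}^{n-1}$, which is $(1+O(\rho^2))$-bi-Lipschitz on a geodesic ball of radius $\rho$. For $\liminf_{\theta\to0}\delta_{n-1}(\theta)\ge d_{n-1}$: fix $\varepsilon>0$ and a lattice-periodic packing of $\mathbb{R}^{n-1}$ of density $>d_{n-1}-\varepsilon$; cover $\mathbb{S}^{n-1}$ by finitely many geodesic balls of a fixed small radius $\rho$, transplant a rescaled copy of the periodic packing into each, discard the caps meeting a ball's boundary (a fraction $O(\theta/\rho)\to0$), and let $\theta\to0$, then $\rho\to0$, then $\varepsilon\to0$. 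For $\limsup_{\theta\to0}\delta_{n-1}(\theta)\le d_{n-1}$: partition $\mathbb{S}^{n-1}$ into cells of diameter $\rho$, assign each cap of a given packing to a cell containing its centre, pull the cell back to a nearly Euclidean region of linear size $\approx\rho/\theta$, and apply the finite-region Euclidean packing bound (density $\le d_{n-1}+O(\theta/\rho)$, obtained by tiling $\mathbb{R}^{n-1}$ with translates of a large cube) together with the metric distortion, to get $\delta_{n-1}(\theta)\le d_{n-1}\bigl(1+O(\theta/\rho)+O(\rho^2)\bigr)$; again let $\theta\to0$ then $\rho\to0$. (Alternatively, this convergence of spherical cap-packing densities to the Euclidean sphere-packing density can be quoted from the packing literature.) Substituting $\delta_{n-1}(\theta(r))\to d_{n-1}$ into the two displays above yields $\kappa_H(n,r)\sim(n-1)\,d_{n-1}\,B\!\left(\tfrac{n-1}{2},\tfrac12\right)e^{(n-1)r}$, as claimed.
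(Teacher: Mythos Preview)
Your proof is correct and follows essentially the same route as the paper's: you express $\kappa_H(n,r)$ exactly as $\delta_{n-1}(\theta(r))$ times $2B(\tfrac{n-1}{2},\tfrac12)/B(\sin^2\theta(r);\tfrac{n-1}{2},\tfrac12)$, expand the incomplete beta function near $0$, use $\sech r\sim 2e^{-r}$, and invoke $\lim_{\theta\to0}\delta_{n-1}(\theta)=d_{n-1}$. The only difference is that the paper simply cites Yaglom and Levenshtein for this last limit, whereas you supply a bi-Lipschitz transplantation sketch (and note the option to cite); otherwise the arguments coincide.
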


\begin{proof}
Let $d_n$ be the best packing density of $\mathbb{R}^n$ by unit spheres, and let $\delta_n(\theta)$ be the best packing density of $\mathbb{S}^n$ by spherical caps of (spherical) radius $\theta$. The quantity $\rho = 2 \sin\frac{\theta}{2}$ can be regraded as the ``Euclidean'' radius of such a spherical cap. Let $\overline{\delta}_n(\rho)$ be the best packing density of $\mathbb{S}^n$ by spherical caps with Euclidean radii $\rho$. 

As mentioned by Yaglom \cite{Yaglom}\footnote{a complete and rigorous proof was given by Levenshtein in \cite{Levenshtein-2}.}, 
\begin{equation*}
\lim_{\theta \to 0} \delta_n(\theta) = \lim_{\rho \to 0} \overline{\delta}_n(\rho) =  \lim_{\rho \to 0} (\text{packing density of } \frac{1}{\rho} \mathbb{S}^n \text{ by spherical caps of Euclidean radius } 1),
\end{equation*}
where $\frac{1}{\rho} \mathbb{S}^n$ is the rescaled unit sphere 
$\mathbb{S}^n$ with scaling factor $\frac{1}{\rho} \to \infty$. Since $\frac{1}{\rho}$ approximates $\mathbb{R}^n$ as $\rho \to 0$, we have that 
\begin{equation*}
\lim_{\theta \to 0} \delta_n(\theta) = \lim_{\rho \to 0} \overline{\delta}_n(\rho) = d_n. 
\end{equation*}

Thus, we have 
\begin{equation*}
\delta_{n-1}(\theta) = \sup_{C_1, \ldots, C_m} \frac{\sum^m_{i=1} \mathrm{Area}\, C_i}{\mathrm{Area}\, \mathbb{S}^{n-1}}, 
\end{equation*}
where $C_1, \ldots, C_m$ is a packing of radius $\theta$ caps on $\mathbb{S}^{n-1}$. Let us note, that the above can be written as
\begin{equation*}
\delta_{n-1}(\theta) = \kappa_H(n, r) \cdot  \frac{\mathrm{Area}\, C(\theta)}{\mathrm{Area}\, \mathbb{S}^{n-1}}, 
\end{equation*}
where $\sin \theta = \frac{\sech r}{2}$ and $\kappa_H(n ,r)$ is the kissing number for radius $r$ congruent spheres in $\mathbb{H}^n$, and $C(\theta)$ is a spherical cap of radius $\theta$. 

From the discussion above, it follows that
\begin{equation*}
\kappa_H(n, r) = \delta_{n-1}(\theta) \cdot \frac{2 B\left( \frac{n-1}{2}, \frac{1}{2} \right)}{B\left( \frac{\sech^2 r}{4}; \frac{n-1}{2}, \frac{1}{2} \right)}.
\end{equation*}

By using the series expansion of the integrand in the definition of $B(x; \frac{n-1}{2}, \frac{1}{2})$ around $x=0$ and integrating term-wise, then inverting the series, we obtain:
\begin{equation*}
\frac{1}{B(x; \frac{n-1}{2}, \frac{1}{2})} = \frac{n-1}{2} \cdot x^{\frac{1-n}{2}} \cdot \left(  1 + O(x) \right).
\end{equation*}

Since $\sech r \sim 2 e^{-r}$, as $r \to \infty$, and also $\lim_{\theta \to 0} \delta_{n-1}(\theta) = d_{n-1}$, we can combine all the above into the corollary's statement.
\end{proof}

In Section \ref{section:upper-lower-bounds}, by using SDP and by constructing concrete configurations of kissing spheres, respectively, we produce upper and lower bounds for $\kappa_H(n, r)$ in dimensions $n = 3, 4$ and for various radii. In dimension $n=3$, most of the bounds appear to be optimal. 

\section{Estimating the kissing number in spherical space}\label{theoreticalbound-S}

In this section we produce some upper and lower bounds for the kissing number $\kappa_S(n, r)$ in $\mathbb{S}^n$, $n\geq 2$, so that we can analyse its general behaviour, and some limiting values when the radius $r$ approaches $0$ or $\frac{\pi}{3}$. In this respect, we use classical geometric approach. We refer the reader to \cite[\S 2.1]{Ratcliffe} for the necessary basics of spherical geometry.

\subsection{Upper bound}

First we prove the following upper bound for $\kappa_S(n, r)$, with $0 \leq r \leq \frac{\pi}{3}$.

\begin{thm}\label{thm:upper-bound-s}
For any integer $n \geq 2$ and a non-negative number $r \leq \frac{\pi}{3}$, we have that 
\begin{equation*}
\kappa_S(n, r) \leq \frac{2\, B\left( \frac{n-1}{2}, \frac{1}{2} \right)}{B\left( \frac{\sec^2 r}{4}; \frac{n-1}{2}, \frac{1}{2} \right)},
\end{equation*} 
where $B(x; y, z) = \int^x_0 t^{y-1} (1-t)^{z-1} dt$, for all $x \in [0,1]$ and $y, z > 0$, is the incomplete beta-function, and $B(y, z) = B(1; y, z)$.
\end{thm}
\begin{proof}
Let $S_0$ be a radius $r$ sphere in $\mathbb{S}^n$ with centre $O$, and let $S_i$, $i = 1, \dots, k$, be its neighbours in a kissing configuration that are also radius $r$ spheres with centres $O_i$, $i = 1, \dots, k$, respectively. Consider a configuration of two tangent spheres: $S_0$ and $S_i$ for some $i$, $1\leq i \leq k$. Let $OT_i$ be the geodesic ray emanating from $O$ that is tangent to $S_i$ at point $T_i$. Let also $L_i$ be the point of tangency between $S_0$ and $S_i$, while $N_i$ is the intersection point of $S_0$ with $OT_i$.

Then in the $OO_iT_i$ spherical plane we have a right spherical triangle with vertices exactly $O$, $O_i$ and $T_i$. Let $\theta$ be the angle at $O$. Then by the spherical law of sines \cite[Theorem 2.5.2]{Ratcliffe}, we obtain 
\begin{equation*}
\sin |O_iT_i| = \sin |OO_i| \sin \theta,
\end{equation*}
which implies, once we substitute the lengths $|O_i T_i| = r$ and $|OO_i| = 2r$,
\begin{equation*}
\sin \theta = \frac{\sec r}{2}.
\end{equation*}
The condition $r < \frac{\pi}{3}$ ensures that the triangle $OO_iT_i$ does not degenerate into a spherical geodesic ``lune'' of angle $\theta = \frac{\pi}{2}$.  

If we project $S_i$ onto $S_0$ along the geodesic rays emanating from the centre $O$ of $S_0$, we obtain a ``cap'' $C_i$ on $S_0$, and all such caps resulting from a kissing configuration have non-intersecting interiors. However, we shall consider a purely Euclidean picture instead that can be obtained as follows. 

Since $S_0$ is a section of $\mathbb{S}^n \subset \mathbb{R}^{n+1}$ by a hyperplane $P_0$, let us consider the orthogonal projection $p$ of $\mathbb{R}^{n+1}$ onto $P_0$. Then the centre $O$ of $S_0$ is projected down to a point $O^* = p(O)$ in $P_0$, while we have $L^*_i = p(L_i) = L_i$, $N^*_i = p(N_i) = N_i$. Thus, the cap $C_i$ project to a cap $C^*_i = p(C_i) = C_i$ on the sphere $S^*_0 = p(S_0) = S_0$ in the hyperplane $P_0$. The cap $C^*_i$ has angular radius $\theta$ as measured on the surface of $S^*_0$, for all $1 \leq i \leq k$. 

Thus we obtain that
\begin{equation*}
\mathrm{Area}\, S^*_0 \geq \sum^{k}_{i=1} \mathrm{Area}\, C^*_i = \kappa_S(n,r) \cdot \mathrm{Area}\, C^*_i,
\end{equation*} 
where 
\begin{equation*}
\mathrm{Area}\, C^*_i = \frac{1}{2}\cdot \mathrm{Area}\, S^*_0 \cdot \frac{B\left( \sin^2 \theta; \frac{n-1}{2}, \frac{1}{2} \right)}{B( \frac{n-1}{2}, \frac{1}{2} )}.
\end{equation*}
We remark that the above formula is valid only for spherical caps of angular radius $\theta \leq \frac{\pi}{2}$. Otherwise, the resulting area will be that of the complementary region to $C^*_i$ in $S^*_0$. As our condition $r \leq \frac{\pi}{3}$ implies $\theta \leq \frac{\pi}{2}$, the theorem follows.  
\end{proof}

\subsection{Lower bound}

In order to obtain a lower bound, we shall again use the interplay between packing and covering provided by Lemma~\ref{lemma:pack-to-cover}. 

\begin{thm}\label{thm:lower-bound-s}
For any integer $n \geq 2$ and a non-negative number $r$, we have that 
\begin{equation*}
\kappa_S(n, r) \geq \left\{ 
\begin{array}{cc}
\frac{2\, B\left( \frac{n-1}{2}, \frac{1}{2} \right)}{B\left(\sec^2 r - \frac{\sec^4 r}{4}; \frac{n-1}{2}, \frac{1}{2} \right)}, &\text{ if } 0 \leq r \leq \frac{\pi}{4}, \\
\frac{2\, B\left( \frac{n-1}{2}, \frac{1}{2} \right)}{2\, B\left( \frac{n-1}{2}, \frac{1}{2} \right) - B\left(\sec^2 r - \frac{\sec^4 r}{4}; \frac{n-1}{2}, \frac{1}{2} \right)}, &\text{ if } \frac{\pi}{4} \leq r \leq \frac{\pi}{3}.
\end{array}  
\right. 
\end{equation*} 
where $B(x; y, z) = \int^x_0 t^{y-1} (1-t)^{z-1} dt$, for all $x \in [0,1]$ and $y, z > 0$, is the incomplete beta-function, and $B(y, z) = B(1; y, z)$.
\end{thm}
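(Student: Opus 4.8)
The plan is to mimic the proof of Theorem~\ref{thm:lower-bound-h}, replacing the hyperbolic input by its spherical analogue. First I would invoke the spherical version of the packing-to-covering trick: in the proof of Theorem~\ref{thm:upper-bound-s} we produced, for $k = \kappa_S(n,r)$, a packing of $S^*_0$ by $k$ mutually congruent spherical caps $C^*_i$ of angular radius $\theta$ with $\sin\theta = \frac{\sec r}{2}$. One first checks that this packing is \emph{maximal}, by exactly the same argument as in Lemma~\ref{lemma:caps-max-packing}: two small spheres $S_i,S_j$ tangent to $S_0$ intersect if and only if the corresponding caps on $S_0$ intersect (this can be read off from a two-dimensional section through $O,O_i,O_j$, now using spherical rather than Euclidean circles but with the same incidence behaviour), so an extra cap could always be promoted to an extra tangent sphere, contradicting maximality of the kissing configuration. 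Then, viewing $S^*_0$ with its angular metric as a rescaled $\mathbb{S}^{n-1}$ in which each $C^*_i$ is a metric ball of radius $\theta$, Lemma~\ref{lemma:pack-to-cover} gives that the concentric caps $C'_i$ of angular radius $2\theta$ cover $S^*_0$.

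Next I would turn the covering inequality into the stated bound. From the cover we get
\begin{equation*}
\kappa_S(n,r)\cdot \mathrm{Area}\, C'_i = \sum_{i=1}^{k}\mathrm{Area}\, C'_i \geq \mathrm{Area}\, S^*_0,
\end{equation*}
so $\kappa_S(n,r) \geq \mathrm{Area}\, S^*_0 / \mathrm{Area}\, C'_i$. The subtlety, and the reason the statement splits into two cases, is that the spherical cap area formula
\begin{equation*}
\mathrm{Area}\, C'_i = \frac{1}{2}\cdot \mathrm{Area}\, S^*_0 \cdot \frac{B\!\left(\sin^2(2\theta);\frac{n-1}{2},\frac{1}{2}\right)}{B\!\left(\frac{n-1}{2},\frac{1}{2}\right)}
\end{equation*}
is only valid when the angular radius $2\theta \leq \frac{\pi}{2}$; for $2\theta > \frac{\pi}{2}$ the cap is more than a hemisphere and its area equals $\mathrm{Area}\, S^*_0$ minus the area of the complementary cap of radius $\pi - 2\theta$. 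Since $\sin\theta = \frac{\sec r}{2}$ is increasing on $[0,\frac{\pi}{3}]$, the threshold $2\theta = \frac{\pi}{2}$, i.e. $\sin\theta = \frac{1}{\sqrt2}$, i.e. $\sec r = \sqrt2$, corresponds exactly to $r = \frac{\pi}{4}$. So on $[0,\frac{\pi}{4}]$ one plugs the plain formula in, while on $[\frac{\pi}{4},\frac{\pi}{3}]$ one writes $\mathrm{Area}\, C'_i = \mathrm{Area}\, S^*_0 - \frac12\mathrm{Area}\, S^*_0\cdot \frac{B(\sin^2(2\theta);\frac{n-1}{2},\frac12)}{B(\frac{n-1}{2},\frac12)}$, using $\sin^2(\pi-2\theta) = \sin^2(2\theta)$, which produces the second branch with the $2B(\tfrac{n-1}{2},\tfrac12) - B(\cdots)$ denominator.

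Finally, in both branches I would substitute $\sin^2(2\theta) = 4\sin^2\theta(1-\sin^2\theta)$ with $\sin^2\theta = \frac{\sec^2 r}{4}$, giving $\sin^2(2\theta) = \sec^2 r - \frac{\sec^4 r}{4}$, which is exactly the argument appearing in the beta-functions in the statement; the two cases of the inequality then read off directly from the two expressions for $\mathrm{Area}\, C'_i$. The main obstacle is not any hard estimate but the bookkeeping around the hemisphere threshold: one must be careful that when $2\theta > \frac{\pi}{2}$ the cap $C'_i$ genuinely is the larger region (so its area is $\mathrm{Area}\, S^*_0$ minus a small cap), and that the covering conclusion of Lemma~\ref{lemma:pack-to-cover} still applies verbatim even though the balls of radius $2\theta$ are then larger than a hemisphere — this is fine because the lemma only uses the triangle inequality on the metric sphere and never the cap area formula. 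One should also note that for $r$ close to $\frac{\pi}{3}$ the radius $2\theta$ may even exceed the diameter-type bound so that $C'_i$ is all of $S^*_0$, in which case the bound degenerates to $\kappa_S(n,r)\geq 1$, consistent with the stated formula.
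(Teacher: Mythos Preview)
Your proposal is correct and follows essentially the same route as the paper: maximality of the cap packing on $S^*_0$, the packing-to-covering Lemma~\ref{lemma:pack-to-cover} to obtain a cover by caps of radius $2\theta$, and then the case split at $\theta=\frac{\pi}{4}$ (equivalently $r=\frac{\pi}{4}$) coming from the hemisphere threshold in the cap-area formula. Your additional remarks explaining why the threshold sits at $r=\frac{\pi}{4}$ and why Lemma~\ref{lemma:pack-to-cover} remains valid for caps larger than a hemisphere are helpful elaborations the paper leaves implicit.
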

\begin{proof}
Let us observe that the packing of $S^*_0$ by the spherical caps $C^*_i$, $i = 1, 2, \dots, k$, from Theorem~\ref{thm:upper-bound-s}  is maximal, if $k = \kappa_H(n, r)$. Since rescaling does not change angular distances, we may assume that $S^*_0$ has unit radius. 

Let $C^\prime_i$ be a spherical cap concentric to $C^*_i$ of angular radius $2\theta$. By Lemma~\ref{lemma:pack-to-cover},  $C^\prime_i$'s cover $S_0$. Then 
\begin{equation*}
\kappa_S(n, r)\cdot \mathrm{Area}\, C^\prime_i = \sum^k_{i=1} \mathrm{Area}\, C^\prime_i \geq \mathrm{Area}\, S^*_0,
\end{equation*}
where 
\begin{equation*}
\mathrm{Area}\, C^\prime_i =   \frac{1}{2}\cdot \mathrm{Area}\, S^*_0 \cdot \left\{ \begin{array}{cc}
\frac{B\left(\sin^2(2 \theta),\frac{n-1}{2}, \frac{1}{2} \right)}{B( \frac{n-1}{2}, \frac{1}{2} )}, &\text{ if } 0\leq \theta \leq \frac{\pi}{4},\\
2 - \frac{B\left(\sin^2(2 \theta); \frac{n-1}{2}, \frac{1}{2} \right)}{B( \frac{n-1}{2}, \frac{1}{2} )}, &\text{ if } \frac{\pi}{4} \leq \theta \leq \frac{\pi}{2}. \end{array} \right.
\end{equation*}
depending on whether $\theta \leq \frac{\pi}{4}$ (then the first value realises the cap area) or $\frac{\pi}{4} \leq \theta \leq \frac{\pi}{2}$ (then the second value realises the cap area, while the first one gives the complementary region area), and with $\theta$ satisfying $\sin \theta = \frac{\sec r}{2}$, as before.

By using the formula $\sin(2 \theta) = 2 \sin \theta \cos \theta$, the theorem follows after a straightforward computation.
\end{proof}

The above bound comes from an argument completely analogous to that by Wyner \cite{W}. According to the recent results by Jenssen, Joos, and Perkins, it can be improved by a linear factor in $n$ provided $0 \leq r \leq \frac{\pi}{4}$, c.f. \cite[Theorem 2]{JJP}.

\subsection{Limiting values of kissing numbers}

By putting $r=0$ in the formulas of Theorem \ref{thm:upper-bound-s} and Theorem \ref{thm:lower-bound-s}, we obtain that $\sin \theta = \frac{1}{2}$, or $\theta = \frac{\pi}{3}$, which produces the usual (and rather imprecise) estimates for the Euclidean kissing number $\kappa(n)$. 

As noticed in Section~\ref{sdpbound}, the function $\kappa_S(n, r)$ is a decreasing function in $r$, $0 \leq r \leq \pi/3$, for any $n\geq 2$. Clearly, $\kappa_S(n, r) \leq \kappa(n)$. Thus, the limit $\lim_{r\to 0} \kappa_S(n, r)$ exists, though it might not be equal to $\kappa(n)$. Indeed, $\lim_{r\to 0} \kappa_S(2, r) = 5$, while $\kappa(2) = 6$. On the other hand, $\lim_{r \to 0} \kappa_S(3, r) = \kappa(3) = 12$. Conjecturally, $\lim_{r\to 0} \kappa_S(4, r) = 22$, while it is known that $\kappa(4) = 24$, c.f. \cite{Musin, Musin2}. 

Another extreme end is $\kappa_S(n, \pi/3) = 2$, for all $n\geq 2$. First of all, we obtain from Theorem \ref{thm:upper-bound-s} and Theorem \ref{thm:lower-bound-s} that $1 \leq \kappa_S(n, \pi/3) \leq 2$. Now, let us consider the points $a = (0,-1,0,\dots,0)$, $b = (\sqrt{3}/2,1/2,0,\dots,0)$ and $c=(-\sqrt{3}/2,1/2,0,\dots,0)$ in $\mathbb{S}^n$. Notice that $a$, $b$ and $c$ are placed at mutually equal distances of $2\pi/3$. Thus, the spheres $S_a$, $S_b$ and $S_c$ of radii $\pi/3$, centred at the respective points, are mutually tangent. Each of them has two congruent neighbours and thus $\kappa_S(n,\pi/3) \geq 2$.

It is also clear from the upper bound in Theorem~\ref{thm:upper-bound-s} that $\kappa_S(n, r) = 1$ for $\frac{\pi}{3} < r \leq \frac{\pi}{2}$ and $n \geq 1$. As $r = \frac{\pi}{2}$, the sphere of radius $r$ fills a hemisphere of $\mathbb{S}^n$, and thus $\kappa_S(n, r) = 0$ for $\frac{\pi}{2} < r \leq \pi$ for $n \geq 1$.

In Section~\ref{section:upper-lower-bounds}, by using SDP, by constructing concrete configurations of kissing spheres, and by using the known solutions to Tammes' problem \cite{Danzer, FejesToth, SchuetteWaerden}, we approximate $\kappa_S(n, r)$ as a step function for dimensions $n = 3, 4$ and radii $0 \leq r \leq \pi$.

\section{The semidefinite programming bound}\label{sdpbound}
For $x, y \in \mathbb{R}^n$, we denote by $x \cdot y = x_1 y_1 + x_2 y_2 + \ldots + x_n y_n$ their Euclidean inner product, and let $\mathbb{S}^{n-1} = \{ x \in \mathbb{R}^n : x \cdot x = 1\}$ be the $(n-1)$-dimensional unit sphere. Furthermore, let the \textit{angular distance} between $x, y \in \mathbb{S}^{n-1}$ be $d(x, y) = \arccos(x \cdot y)$, c.f. \cite[\S 2.1]{Ratcliffe}. 

In order to determine upper bounds for the kissing number, we first consider the more general problem of finding the maximal number of points on the unit sphere with minimal angular distance $\theta$. This problem is defined by
$$A(n, \theta) = \max\left\{|C| : C \subset \mathbb{S}^{n-1} \text{ and } x \cdot y \leq \cos \theta\text{ for all distinct }x, y \in C \right\}.$$
Note that the Euclidean kissing number equals $\kappa(n) = A(n, \pi/3)$. 

A set of points  $C \subset \mathbb{S}^{n-1}$ with $d(x, y) \geq \theta$ for all distinct $x, y \in C$ is called a \textit{spherical code with minimal angular distance} $\theta$. Then the kissing number $\kappa_H(n, r)$ of radius $r$ spheres in $\mathbb{H}^n$ is equal to the cardinality of a maximal spherical code $C \in \mathbb{S}^{n-1}$ with $x\cdot y \leq 1 - \frac{1}{1+ \cosh(2r)}$ for $x, y \in C$. 

\begin{lemma}\label{kappanr-h}
\begin{equation*}
\kappa_H(n, r) =  \max\left\{|C| : C \subset \mathbb{S}^{n-1} \text{ and } x\cdot y \leq 1 - \frac{1}{1+ \cosh(2r)} \text{ for all distinct } x, y \in C \right\}.
\end{equation*}
\end{lemma}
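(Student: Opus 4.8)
The plan is to set up an explicit, isometry-invariant bijection between kissing configurations of radius $r$ spheres in $\mathbb{H}^n$ and spherical codes on $\mathbb{S}^{n-1}$ with the stated inner-product constraint, working with the centres of the spheres rather than with the spheres themselves.

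First I would record the following elementary facts about metric balls in a space of constant curvature. Two distinct spheres of equal radius $r$ in $\mathbb{H}^n$ are tangent precisely when the hyperbolic distance between their centres equals $2r$ (external tangency being the only possibility in this case), and their open balls are disjoint precisely when this distance is at least $2r$. Hence a kissing configuration consisting of a central sphere $S_0$, centred at a point $O$, together with $k$ pairwise non-overlapping neighbours $S_1,\dots,S_k$, each tangent to $S_0$, is the same datum as a choice of $O$ and of $k$ distinct points $O_1,\dots,O_k \in \mathbb{H}^n$, each at distance $2r$ from $O$, with $\mathrm{dist}(O_i,O_j) \geq 2r$ for all $i \neq j$. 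After applying an isometry of $\mathbb{H}^n$ (which changes neither $k$ nor the mutual distances) we may fix $O$ once and for all. Since $\exp_O$ restricts to a diffeomorphism from the sphere of radius $2r$ in $T_O\mathbb{H}^n \cong \mathbb{R}^n$ onto the geodesic sphere $\{ P : \mathrm{dist}(O,P) = 2r \}$, writing $O_i = \exp_O(2r\, x_i)$ with $x_i \in \mathbb{S}^{n-1} \subset T_O\mathbb{H}^n$ sets up a bijection between such configurations and $k$-element subsets $\{x_1,\dots,x_k\}$ of $\mathbb{S}^{n-1}$; distinctness of the $O_i$ corresponds to distinctness of the $x_i$.

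Next I would translate the distance condition. The angle $\angle O_i O O_j$ equals the angular distance $\arccos(x_i \cdot x_j)$ between the corresponding unit tangent vectors, so the hyperbolic law of cosines \cite{Ratcliffe}, applied to the geodesic triangle $O O_i O_j$ whose two sides at $O$ have length $2r$, gives
\begin{equation*}
\cosh \mathrm{dist}(O_i, O_j) = \cosh^2(2r) - \sinh^2(2r)\,(x_i \cdot x_j).
\end{equation*}
Since $\cosh$ is increasing on $[0,\infty)$, the inequality $\mathrm{dist}(O_i,O_j) \geq 2r$ is equivalent to $\cosh^2(2r) - \sinh^2(2r)\,(x_i\cdot x_j) \geq \cosh(2r)$, which, using $\sinh^2(2r) = \cosh^2(2r) - 1$ and cancelling the factor $\cosh(2r) - 1 > 0$, simplifies to
\begin{equation*}
x_i \cdot x_j \leq \frac{\cosh(2r)}{1 + \cosh(2r)} = 1 - \frac{1}{1 + \cosh(2r)}.
\end{equation*}
Thus $\{x_1,\dots,x_k\}$ is a spherical code with the required property, and conversely any such code produces, via $O_i = \exp_O(2r\, x_i)$, a valid kissing configuration of $k$ radius-$r$ spheres around $S_0$. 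Taking the maximum over $k$ on both sides yields the lemma.

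The computations here are routine; the only points demanding care are the correct sign and form of the hyperbolic law of cosines for an isosceles geodesic triangle, the fact that $\exp_O$ identifies the geodesic sphere of radius $2r$ with $\mathbb{S}^{n-1}$ in a way that carries $\angle O_i O O_j$ to $\arccos(x_i \cdot x_j)$, and the observation that passing from spheres to their centres loses no information because a sphere of fixed radius in $\mathbb{H}^n$ is determined by its centre. Everything else reduces to the monotonicity of $\cosh$ and elementary algebra, so I do not expect a genuine obstacle.
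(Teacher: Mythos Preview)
Your proof is correct and follows essentially the same approach as the paper: both arguments identify the centres $O_i$ of the neighbouring spheres with points on $\mathbb{S}^{n-1}$ and apply the hyperbolic law of cosines to the triangle $OO_iO_j$ to convert the distance constraint $\mathrm{dist}(O_i,O_j)\ge 2r$ into the stated inner-product bound. The paper's version is terser---it only records the computation of $\cos\theta$ in the equilateral case $\mathrm{dist}(O_i,O_j)=2r$ and leaves the bijection and the monotonicity step implicit---whereas you spell out the correspondence via $\exp_O$ and handle the full inequality, but the underlying idea is the same.
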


\begin{proof}
Consider an equilateral hyperbolic triangle with side length $2r$, and let $\theta$ be one of its inner angles. By the hyperbolic law of cosines, we obtain
$$\cos \theta  = \frac{\cosh^2 (2r) - \cosh (2r)}{\sinh^2 (2r)} = \frac{\cosh (2r) (\cosh (2r) -1)}{\cosh^2 (2r) -1} = \frac{\cosh (2r)}{1 + \cosh (2r)} = 1 - \frac{1}{1+ \cosh(2r)}.$$
\end{proof}

From Lemma~\ref{kappanr-h} it becomes clear that $\kappa_H(n, r)$ is an increasing function of $r\geq 0$, for all $n\geq 1$, since $\frac{1}{1+ \cosh(2r)}$ is decreasing with $r$, and thus the set of possible codes $C$ becomes larger as $r$ increases. Analogously, $\kappa_H(n+1, r) \geq \kappa_H(n, r)$, for all $n\geq 1$ and $r\geq 0$, since $\mathbb{S}^{n-1} \subset \mathbb{S}^n$, as $n$ increases.

The optimal value of the semidefinite program of Bachoc and Vallentin \cite{BachocVallentin} is an upper bound of $A(n, \theta)$. We can adapt their program to obtain upper bounds for the kissing number in hyperbolic space.

For $n \geq 3$, let $P^n_k(u)$ denote the Jacobi polynomial of degree $k$ and parameters $((n-3)/2, (n-3)/2)$, normalized by $P^n_k(1) = 1$. If $n = 2$, then $P^n_k(u)$ denotes the Chebyshev polynomial of the first kind of degree $k$. For a fixed integer $d > 0$, we define $Y^n_k$ to be a $(d-k+1) \times (d-k+1)$ matrix whose entries are polynomials on the variables $u, v, t$ defined by
$$(Y^n_k)_{i, j}(u, v, t) = P^{n+2k}_i(u) P^{n+2k}_j(v) Q^{n-1}_i(u, v, t),$$
for $ 0 \leq i, j \leq d-k$, where
$$Q^{n-1}_k(u, v, t) = ((1-u^2)(1-v^2))^{k/2} P^{n-1}_k\left(\frac{t-uv}{\sqrt{(1-u^2)(1-v^2)}}\right).$$
The symmetric group on three elements $\mathcal{S}_3$ acts on a triple $(u, v, t)$ by permuting its components. This induces the action
$\sigma p(u, v, t) = p(\sigma^{-1}(u, v, t))$
on $\mathbb{R}[u, v, t]$, where $\sigma \in \mathcal{S}_3$. By taking the group average of $Y^n_k$, we obtain the matrix
$$S^n_k(u,v,t) = \frac{1}{6} \sum_{\sigma \in \mathcal{S}_3} \sigma Y^n_k(u,v,t), $$
whose entries are invariant under the action of $\mathcal{S}_3$.

A symmetric matrix $A \in \mathbb{R}^{n\times n}$ is called positive semidefinite if all its eigenvalues are non-negative. We write this as $A \succeq 0$. A \textit{semidefinite program} (SDP) is an optimisation problem for a linear function over a set of positive semidefinite matrices restricted by linear matrix equalities. For $A, B \in \mathbb{R}^{n\times n}$, let $\langle A,B \rangle = \text{tr}(B^TA)$ be the trace inner product. Also, we define
$$\bigtriangleup = \{(u, v, t) \in \mathbb{R}^3 : -1 \leq u \leq v \leq t \leq \cos \theta \text{ and } 1+2uvt-u^2-v^2-t^2 \geq 0\}$$
and
$$\bigtriangleup_0 = \{ (u, u, 1) : -1 \leq u \leq \cos \theta\}.$$
The triples $(u,v,t) \in \bigtriangleup$ are possible inner products between three points in a spherical code in $\mathbb{S}^{n-1}$ with minimal angular distance $\theta$. Hence, 
\begin{align*}
(u,v,t) \in \bigtriangleup \text{ if and only if } \exists\,\, x, y, z \in \mathbb{S}^{n-1}:\,\, & x \cdot y \leq \cos \theta,\, x \cdot z \leq \cos \theta,\, y \cdot z  \leq \cos \theta, \\
&x \cdot y = u,\, x \cdot z = v,\, y \cdot z = t.
\end{align*}  
In \cite{BachocVallentin}, Bachoc and Vallentin proved the following theorem, where $J$ denotes the ``all 1's'' matrix.
\begin{thm}\label{BVsdp} Any feasible solution of the following optimisation program gives an upper bound on $A(n, \theta)$:
\begin{align*}
 \min~~ &1 + \sum_{k=1}^d a_k + b_{11} + \langle J, F_0\rangle,\\
 & a_k \geq 0 \text{ for } k=1, \ldots, d, \\
& \begin{pmatrix}
b_{11} & b_{12} \\ b_{21} & b_{22}
\end{pmatrix} \succeq 0 , \\
& F_k \in \mathbb{R}^{(d-k+1) \times (d-k+1)} \text{ and } F_k \succeq 0 \text{ for } k = 0, \ldots, d,\\
&(i)~ \sum_{k=1}^d a_k P^n_k(u) + 2b_{12} + b_{22} + 3\sum_{k=0}^d\langle S^n_k(u,u,1),F_k \rangle \leq -1 \text { for } (u,u,1) \in \bigtriangleup_0, \\
&(ii)~ b_{22} + \sum_{k=0}^d\langle S^n_k(u,v,t)F_k \rangle \leq 0 \text{ for } (u,v,t) \in \bigtriangleup.
\end{align*}
\end{thm}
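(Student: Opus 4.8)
The plan is to follow the derivation of Bachoc and Vallentin in \cite{BachocVallentin}, which combines the Delsarte--Goethals--Seidel two-point (linear programming) bound with a genuine three-point positivity certificate carried by the zonal matrices $S^n_k$. Fix a spherical code $C=\{x_1,\dots,x_N\}\subset\mathbb{S}^{n-1}$ with $x_i\cdot x_j\le\cos\theta$ for all $i\ne j$; the goal is to prove $N\le 1+\sum_k a_k+b_{11}+\langle J,F_0\rangle$ for every feasible tuple $(a_k,b_{ij},F_k)$. Two positivity inputs are needed. First, Schoenberg's addition formula writes $P^n_k(x\cdot y)=\tfrac1{h_k}\sum_m e_{k,m}(x)e_{k,m}(y)$ over an orthonormal basis of degree-$k$ spherical harmonics, so for $k\ge1$ the matrix $\big(P^n_k(x_i\cdot x_j)\big)_{i,j}$ is a Gram matrix and $\sum_{i,j}P^n_k(x_i\cdot x_j)\ge0$. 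Second, and this is the technical heart, for every $k$ the matrix inequality
$$\sum_{i,j,l}S^n_k\big(x_i\cdot x_j,\,x_i\cdot x_l,\,x_j\cdot x_l\big)\ \succeq\ 0$$
holds, and hence $\sum_{i,j,l}\langle S^n_k(x_i\cdot x_j,x_i\cdot x_l,x_j\cdot x_l),F_k\rangle\ge0$ because $F_k\succeq 0$.

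For this second input I would argue as follows. Since the sum runs over all ordered triples and every permutation of the three pairwise inner products of a triple is realised by a relabelling of $(i,j,l)$, the symmetrisation is free: $\sum_{i,j,l}S^n_k=\sum_{i,j,l}Y^n_k$. Fix an ``apex'' index $l$ and evaluate $Y^n_k$ at $(x_i\cdot x_l,\,x_j\cdot x_l,\,x_i\cdot x_j)$; the key point is that the argument of $P^{n-1}_k$ inside $Q^{n-1}_k$ is then exactly $\bar x_i\cdot\bar x_j$, where $\bar x_i$ is the unit vector along the orthogonal projection of $x_i$ onto $x_l^{\perp}$, while the prefactor $\big((1-(x_i\cdot x_l)^2)(1-(x_j\cdot x_l)^2)\big)^{k/2}$ records the lengths of those projections. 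Applying Schoenberg's formula a second time, now on the sphere $x_l^{\perp}\cap\mathbb{S}^{n-1}\cong\mathbb{S}^{n-2}$, exhibits $M_l:=\sum_{i,j}Y^n_k(x_i\cdot x_l,x_j\cdot x_l,x_i\cdot x_j)$ entrywise as $(M_l)_{a,b}=\tfrac1h\sum_m\alpha^{(l)}_{a,m}\alpha^{(l)}_{b,m}$ with $\alpha^{(l)}_{a,m}=\sum_i P^{n+2k}_a(x_i\cdot x_l)\,\phi^{(l)}_m(x_i)$, so that $M_l$ is a sum of rank-one positive semidefinite matrices; summing over $l$ yields the claim. (One uses here that $Q^{n-1}_k$ is an honest polynomial in $u,v,t$ — which rests on $P^{n-1}_k$ having parity $(-1)^k$ — so that $x\mapsto\phi^{(l)}_m(x)=Y_{k,m}\big(x-(x\cdot x_l)x_l\big)$ is polynomial and no singularity arises at $x=\pm x_l$.)

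Granting both inputs, set $g(u)=\sum_{k\ge1}a_kP^n_k(u)$ and $F(u,v,t)=\sum_k\langle S^n_k(u,v,t),F_k\rangle$, and decompose the nonnegative quantity $\sum_{i,j,l}F(x_i\cdot x_j,x_i\cdot x_l,x_j\cdot x_l)$ according to how many of $i,j,l$ coincide. The fully diagonal part $i=j=l$ contributes $N\,F(1,1,1)=N\langle J,F_0\rangle$, using $Y^n_0(1,1,1)=J$ and $Y^n_k(1,1,1)=0$ for $k\ge1$. The ``exactly two equal'' part equals $3\sum_{i\ne j}F(x_i\cdot x_j,x_i\cdot x_j,1)$ and is bounded above by constraint~$(i)$: summing over the $N(N-1)$ ordered pairs produces the terms $-N(N-1)(1+2b_{12}+b_{22})$ and $-\sum_{i\ne j}g(x_i\cdot x_j)$. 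The ``all distinct'' part is a sum over the $N(N-1)(N-2)$ genuine triples $(u,v,t)\in\bigtriangleup$, each at most $-b_{22}$ by constraint~$(ii)$. Adding the Schoenberg inequality $\sum_{i\ne j}g(x_i\cdot x_j)\ge-N\sum_k a_k$ to cancel the $g$-terms, and dividing by $N$, a routine rearrangement gives
$$(N-1)+2b_{12}(N-1)+b_{22}(N-1)^2\ \le\ \sum_{k=1}^{d}a_k+\langle J,F_0\rangle.$$
Finally the $2\times2$ block closes the argument: positive semidefiniteness of $\left(\begin{smallmatrix}b_{11}&b_{12}\\ b_{12}&b_{22}\end{smallmatrix}\right)$ evaluated on $(1,N-1)^{\mathsf T}$ gives $b_{11}+2b_{12}(N-1)+b_{22}(N-1)^2\ge0$, which together with the last display yields $N-1\le\sum_k a_k+\langle J,F_0\rangle+b_{11}$, i.e.\ the asserted bound $N\le 1+\sum_k a_k+b_{11}+\langle J,F_0\rangle$. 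I expect the genuine difficulty to lie entirely in the matrix addition formula of the second paragraph — the orthogonal-complement identity for the argument of $Q^{n-1}_k$, and the polynomiality that legitimises $\phi^{(l)}_m$ — while everything else is the bookkeeping of coincidence strata together with one elementary manipulation of a $2\times2$ positive semidefinite matrix.
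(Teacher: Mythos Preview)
The paper does not give its own proof of this theorem: it is quoted from Bachoc and Vallentin \cite{BachocVallentin} and used as a black box. Your write-up is a faithful and correct reconstruction of the original Bachoc--Vallentin argument --- the two positivity inputs (Schoenberg for the $P^n_k$, and the zonal-matrix positivity $\sum_{i,j,l}S^n_k\succeq 0$ obtained by projecting to $x_l^\perp$ and applying Schoenberg on $\mathbb{S}^{n-2}$), the decomposition of the triple sum into coincidence strata, and the final use of the $2\times 2$ block on the vector $(1,N-1)^{\mathsf T}$ --- all match the source. Two small remarks: your argument in fact shows the slightly stronger pointwise fact $M_l\succeq 0$ for each apex $l$ (indeed for any $e\in\mathbb{S}^{n-1}$, not just code points), which is how Bachoc--Vallentin state it; and note that the paper's displayed definition of $(Y^n_k)_{i,j}$ contains a typo ($Q^{n-1}_i$ should read $Q^{n-1}_k$), which you have silently corrected.
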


The conditions $(i)$ and $(ii)$ of the previous program are polynomial constraints where we have to check that certain polynomials are non-negative in a given domain. Constraints of this kind can be written as sum-of-squares conditions. A polynomial $p$ is said to be a \textit{sum-of-squares polynomial} if and only if there exists polynomials $q_1, \ldots, q_m$ such that $$p = q_1^2 + \ldots + q_m^2.$$ To be a sum-of-squares polynomial is a sufficient condition for non-negativity. Analogously, one can also use sum-of-squares to check non-negativity for a certain domain: if there exists sum-of-squares polynomials $q_1,q_2$ such that
$$p(x) = q_1(x) + (b-x)(x-a)q_2(x)$$
then $p(x) \geq 0$ for $a \leq x \leq b$. Using sum-of-squares relaxations we can formulate the program in Theorem \ref{BVsdp} as an SDP. 

In order to obtain a finite-dimensional SDP which we can solve in practice, we have to fix the degree of the polynomials we consider for the sum-of-squares conditions. A polynomial $p(x_1, \ldots, x_n) \in \mathbb{R}[x_1, \ldots, x_n]$ of degree $2d$ can be written as a sum-of-squares if and only if there exists a positive semidefinite matrix $X$ such that $$p(x_1, \ldots, x_n) = \left\langle X,v^d(x_1, \ldots, x_n)v^d(x_1, \ldots, x_n)^T\right\rangle,$$ where $v^d(x_1, \ldots, x_n)\in \mathbb{R}[x_1, \ldots, x_n]^{\binom{n+d}{d}}$ is a vector which contains a basis of the space of real polynomials up to degree $d$. We denote $V^d(x) = v^d(x)v^d(x)^T$.

Hence, we obtain for the above conditions $(i)$ and $(ii)$ the following sum-of-squares relaxations
\begin{align*}
(i)~ \sum_{k=1}^d a_k P^n_k(u) + 2b_{12} + b_{22} + 3\sum_{k=0}^d\langle S^n_k(u,u,1)&, F_k \rangle + 1 +  \langle Q_0, V^{d}(u)\rangle \\
&+ (u + 1)(\cos \theta - u) \langle Q_1, V^{d-1}(u)\rangle = 0
\end{align*}
\begin{align*}
(ii)~ b_{22} &+ \sum_{k=0}^d\langle S^n_k(u,v,t), F_k \rangle +\langle R, V^{d}(u,v,t)\rangle + (u + 1)(\cos \theta - u) \langle R_0, V^{d-1}(u,v,t)\rangle \\
&+ (v + 1)(\cos \theta - v) \langle R_1, V^{d-1}(u,v,t)\rangle + (t + 1)(\cos \theta - t) \langle R_2, V^{d-1}(u,v,t)\rangle \\
&+ (1+2uvt-u^2-v^2-t^2) \langle R_3, V^{d-1}(u,v,t)\rangle = 0,
\end{align*}
where $Q_0, Q_1, R, R_0, \ldots, R_3$ are positive semidefinite matrices.

Bachoc and Vallentin proved that any feasible solution of the SDP in Theorem \ref{BVsdp} gives an upper bound on  $$ \max\left\{|C| : C \subset \mathbb{S}^{n-1} \text{ and } x\cdot y \leq \cos \theta\text{ for all distinct }x,y \in C \right\}.$$ Hence, by applying Lemma \ref{kappanr-h} we can replace $\cos \theta$ by $1 - \frac{1}{1+\cosh (2r)}$.

\begin{cor}\label{cor:bound-H}
Any feasible solution of the optimisation program in Theorem \ref{BVsdp} with 
$$\bigtriangleup = \left\{(u,v,t) \in \mathbb{R}^3 : -1 \leq u \leq v \leq t \leq 1 - \frac{1}{1+ \cosh(2r)} \text{ and } 1+2uvt-u^2-v^2-t^2 \geq 0\right\}$$
and
$$\bigtriangleup_0 = \left\{ (u,u,1) : -1 \leq u \leq 1 - \frac{1}{1+ \cosh(2r)}\right\}$$~gives an upper bound on $\kappa_H(n,r)$.
\end{cor}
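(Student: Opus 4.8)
The plan is to obtain Corollary~\ref{cor:bound-H} as an immediate consequence of Theorem~\ref{BVsdp} and Lemma~\ref{kappanr-h}, by nothing more than a relabelling of the parameter $\theta$.

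First I would invoke Lemma~\ref{kappanr-h} to rewrite $\kappa_H(n,r)$ as the maximal cardinality of a spherical code $C\subset\mathbb{S}^{n-1}$ satisfying $x\cdot y\leq 1-\frac{1}{1+\cosh(2r)}$ for all distinct $x,y\in C$. Setting $\theta=\theta(r):=\arccos\left(1-\frac{1}{1+\cosh(2r)}\right)$, I would note that $1-\frac{1}{1+\cosh(2r)}\in[\tfrac12,1)$ for every $r\geq 0$, since $\cosh(2r)\geq 1$; hence $\theta(r)\in(0,\tfrac{\pi}{3}]$ is a genuine angle and $\cos\theta(r)\in(-1,1)$, which is all that the Bachoc--Vallentin construction requires. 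With this choice the quantity in Lemma~\ref{kappanr-h} is precisely $A(n,\theta(r))$ in the notation of Section~\ref{sdpbound}, so that $\kappa_H(n,r)=A(n,\theta(r))$.

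Next I would apply Theorem~\ref{BVsdp} verbatim for this $\theta$: any feasible solution $(a_k,b_{ij},F_k)$ of that program has objective value $1+\sum_{k=1}^{d}a_k+b_{11}+\langle J,F_0\rangle\geq A(n,\theta(r))$. It then remains only to observe that substituting $\cos\theta=1-\frac{1}{1+\cosh(2r)}$ into the definitions of $\bigtriangleup$ and $\bigtriangleup_0$ reproduces exactly the two sets displayed in the statement of Corollary~\ref{cor:bound-H}, while the constraints $(i)$ and $(ii)$ — together with their sum-of-squares relaxations, which only restrict the program further and hence still yield upper bounds — are left unchanged apart from this substitution. Chaining the identity $\kappa_H(n,r)=A(n,\theta(r))$ with the inequality above completes the argument.

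I do not anticipate a genuine obstacle here: the corollary amounts to transporting the Bachoc--Vallentin bound along the identity $\kappa_H(n,r)=A\left(n,\arccos\left(1-\tfrac{1}{1+\cosh(2r)}\right)\right)$, and the one point worth spelling out is the elementary check that $1-\frac{1}{1+\cosh(2r)}$ is an admissible cosine for all $r\geq 0$ — equivalently, that $\theta(r)$ is well defined and lies in $(0,\pi/3]$ — so that Theorem~\ref{BVsdp} may legitimately be applied.
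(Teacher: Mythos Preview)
Your proposal is correct and follows essentially the same approach as the paper: the paper's justification is simply that Theorem~\ref{BVsdp} bounds $A(n,\theta)$ and that, by Lemma~\ref{kappanr-h}, one may replace $\cos\theta$ by $1-\frac{1}{1+\cosh(2r)}$ to obtain a bound on $\kappa_H(n,r)$. Your additional verification that $1-\frac{1}{1+\cosh(2r)}\in[\tfrac12,1)$, so that $\theta(r)$ is a well-defined angle, is a detail the paper leaves implicit.
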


In the case of the kissing number in $\mathbb{S}^n$, $n\geq 2$, an analogous approach can be used. First of all, we can describe $\kappa_S(n,r)$ in terms of spherical codes.

\begin{lemma}\label{kappanr-s}
\begin{equation*}
\kappa_S(n, r) =  \max\left\{|C| : C \subset \mathbb{S}^{n-1} \text{ and } x\cdot y \leq 1 - \frac{1}{1+ \cos(2r)} \text{ for all distinct } x, y \in C \right\}.
\end{equation*}
\end{lemma}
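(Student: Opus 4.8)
The plan is to imitate the proof of Lemma~\ref{kappanr-h}, replacing the hyperbolic law of cosines by its spherical counterpart and first spelling out the geometric dictionary that is only implicit there. Fix a central sphere $S_0$ of radius $r$ with centre $O$, and let $S_1,\dots,S_k$ be radius $r$ spheres tangent to $S_0$ with pairwise disjoint interiors and centres $O_1,\dots,O_k$. Tangency of $S_i$ with $S_0$ means $d(O,O_i)=2r$, while disjointness of $\interior{S_i}$ and $\interior{S_j}$ means $d(O_i,O_j)\geq 2r$. Hence all the $O_i$ lie on the metric sphere of radius $2r$ about $O$, which (for $0<2r<\pi$) is an $(n-1)$-dimensional round sphere; concretely, realising $\mathbb{S}^n\subset\mathbb{R}^{n+1}$ in the standard way, we may write $O_i=\cos(2r)\,O+\sin(2r)\,w_i$ with $w_i$ a unit vector orthogonal to $O$, so that $w_1,\dots,w_k\in\mathbb{S}^{n-1}$.

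Next I would translate $d(O_i,O_j)\geq 2r$ into an inequality on $w_i\cdot w_j$. From the displayed expression for $O_i$ one computes $O_i\cdot O_j=\cos^2(2r)+\sin^2(2r)\,(w_i\cdot w_j)$, and since $\cos$ is decreasing on $[0,\pi]$ the condition $d(O_i,O_j)\geq 2r$ is equivalent to $O_i\cdot O_j\leq\cos 2r$. Solving this inequality and using $\sin^2(2r)=(1-\cos 2r)(1+\cos 2r)$ gives $w_i\cdot w_j\leq\frac{\cos 2r}{1+\cos 2r}=1-\frac{1}{1+\cos 2r}$. Equivalently — and this is the packaging used for $\kappa_H$ — one considers the equilateral spherical triangle $OO_iO_j$ of side length $2r$ (the extremal, mutually tangent case); denoting its inner angle at $O$ by $\theta$, the spherical law of cosines (cf.\ \cite[\S 2.5]{Ratcliffe}) yields $\cos 2r=\cos^2 2r+\sin^2 2r\,\cos\theta$, hence $\cos\theta=\frac{\cos 2r-\cos^2 2r}{\sin^2 2r}=\frac{\cos 2r}{1+\cos 2r}=1-\frac{1}{1+\cos 2r}$, and the condition on the $w_i$ is precisely $w_i\cdot w_j\leq\cos\theta$, i.e.\ $\{w_i\}$ is a spherical code with minimal angular distance $\theta$. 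Conversely, any code $C=\{w_i\}\subset\mathbb{S}^{n-1}$ with $w_i\cdot w_j\leq 1-\frac{1}{1+\cos 2r}$ produces centres $O_i=\cos(2r)\,O+\sin(2r)\,w_i$ and hence a genuine kissing configuration of $|C|$ radius $r$ spheres around $S_0$. This bijection between kissing configurations and codes gives the claimed equality.

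The computation is routine; the only points requiring care are (i) checking that ``non-overlapping interiors'' genuinely corresponds to $d(O_i,O_j)\geq 2r$ with the tangent case as the boundary, and that the assignment from configurations to codes is surjective; and (ii) the degenerate range of $r$. The quantity $1-\frac{1}{1+\cos 2r}$ is $<-1$ once $r>\frac{\pi}{3}$ and becomes undefined at $r=\frac{\pi}{2}$, where the sphere of radius $2r$ collapses to a point; this is exactly consistent with the earlier observations that $\kappa_S(n,r)\leq 2$ for such $r$. I would therefore state the lemma for $2r<\pi$ and note that it degenerates gracefully in the way already exploited in the analysis of the limiting values in Section~\ref{theoreticalbound-S}. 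No genuine obstacle arises: the lemma is the spherical mirror of Lemma~\ref{kappanr-h}, with all the substance contained in the trigonometric identity above.
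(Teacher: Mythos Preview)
Your proof is correct and contains the paper's argument as a proper subset: the paper's entire proof is just the spherical law of cosines computation for the equilateral triangle of side $2r$, yielding $\cos\theta = 1 - \frac{1}{1+\cos 2r}$, with the geometric dictionary (tangency $\Leftrightarrow d(O,O_i)=2r$, disjoint interiors $\Leftrightarrow d(O_i,O_j)\geq 2r$, and the bijection with codes on $\mathbb{S}^{n-1}$) left entirely implicit by analogy with Lemma~\ref{kappanr-h}. You spell out that dictionary explicitly via the embedding $O_i=\cos(2r)\,O+\sin(2r)\,w_i$ and also supply the converse direction, which the paper omits; your discussion of the degenerate range $r>\pi/3$ is likewise absent from the paper's proof (it is handled separately in Section~\ref{theoreticalbound-S}). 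So the approaches coincide on the one substantive step, and your version is simply more complete.
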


\begin{proof}
Consider a spherical triangle with side length $2r$, and let $\theta$ be one of its inner angles. By applying the spherical law of cosines \cite[Theorem 2.5.3]{Ratcliffe}, we obtain
$$\cos \theta  = \frac{\cos (2r) - \cos^2 (2r)}{\sin^2 (2r)} = \frac{\cos (2r) ( 1- \cos (2r))}{1 - \cos^2 (2r)} = \frac{\cos (2r)}{1 + \cos (2r)} = 1 - \frac{1}{1+ \cos(2r)}.$$
\end{proof}

From Lemma~\ref{kappanr-s} we immediately deduce that $\kappa_S(n, r)$ is a decreasing function in $0 \leq r \leq \frac{\pi}{3}$, for any $n\geq 1$, since $\frac{1}{1+ \cos(2r)}$ is increasing with $r$. Indeed, the set of possible spherical codes $C$ in the definition of $\kappa_S(n, r)$ from Lemma~\ref{kappanr-s} becomes smaller as $r$ increases. Thus $\kappa_S(n, r)$ is a decreasing step function of $r$, for any fixed dimension $n \geq 1$. Also, $\kappa_S(n+1, r) \geq \kappa_S(n, r)$, for $n\geq 1$, and $0\leq r \leq \frac{\pi}{3}$, since $\mathbb{S}^{n-1} \subset \mathbb{S}^{n}$, and the above argument applies in Lemma~\ref{kappanr-s} again.

\medskip
Then, an analogue of Corollary~\ref{cor:bound-H} can be formulated.

\begin{cor}\label{cor:bound-S} 
Any feasible solution of the optimisation program in Theorem \ref{BVsdp} with 
$$\bigtriangleup = \left\{(u,v,t) \in \mathbb{R}^3 : -1 \leq u \leq v \leq t \leq 1 - \frac{1}{1+ \cos(2r)} \text{ and } 1+2uvt-u^2-v^2-t^2 \geq 0\right\}$$
and
$$\bigtriangleup_0 = \left\{ (u,u,1) : -1 \leq u \leq 1 - \frac{1}{1+ \cos(2r)}\right\}$$
gives an upper bound on $\kappa_S(n,r)$.
\end{cor}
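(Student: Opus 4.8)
The plan is to reduce Corollary~\ref{cor:bound-S} to the already-established Theorem~\ref{BVsdp} by the same dictionary that Corollary~\ref{cor:bound-H} uses, only now applying Lemma~\ref{kappanr-s} in place of Lemma~\ref{kappanr-h}. First I would recall that Theorem~\ref{BVsdp}, as stated by Bachoc and Vallentin, produces an upper bound on
\begin{equation*}
A(n,\theta) = \max\left\{|C| : C \subset \mathbb{S}^{n-1} \text{ and } x\cdot y \leq \cos\theta \text{ for all distinct } x, y \in C\right\},
\end{equation*}
and that the angle $\theta$ enters the program \emph{only} through the quantity $\cos\theta$, which appears as the common upper endpoint in the definitions of $\bigtriangleup$ and $\bigtriangleup_0$ (and, via the sum-of-squares relaxations of constraints $(i)$ and $(ii)$, through the factors $(\cos\theta - u)$, $(\cos\theta - v)$, $(\cos\theta - t)$). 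Consequently the value $\cos\theta$ may be treated as a free real parameter in $[-1,1)$: replacing it everywhere by any admissible constant $c$ yields a valid SDP whose feasible solutions bound the maximal cardinality of a code $C\subset\mathbb{S}^{n-1}$ with $x\cdot y \leq c$ for all distinct $x,y\in C$.

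Next I would invoke Lemma~\ref{kappanr-s}, which identifies $\kappa_S(n,r)$ with exactly this maximal cardinality for the specific value $c = 1 - \frac{1}{1+\cos(2r)}$; this is the spherical analogue of the computation done for $\mathbb{H}^n$, where the hyperbolic law of cosines for an equilateral triangle of side $2r$ is replaced by the spherical law of cosines \cite[Theorem 2.5.3]{Ratcliffe}. Setting $c = 1 - \frac{1}{1+\cos(2r)}$ in the program of Theorem~\ref{BVsdp} produces precisely the sets $\bigtriangleup$ and $\bigtriangleup_0$ displayed in the statement of Corollary~\ref{cor:bound-S}. Combining the two facts — the program bounds $\max\{|C| : x\cdot y \leq c\}$, and that maximum equals $\kappa_S(n,r)$ — gives the claim immediately. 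I would also note the one restriction implicitly in force: for the substitution to make sense one needs $-1 \leq 1 - \frac{1}{1+\cos(2r)} < 1$, equivalently $\cos(2r) > 0$, i.e.\ $0 \leq r < \frac{\pi}{4}$; for $\frac{\pi}{4} \leq r \leq \frac{\pi}{3}$ one has $c \leq 0$, which is still an admissible value of $\cos\theta$ (corresponding to $\theta \in [\frac{\pi}{2}, \frac{2\pi}{3}]$), so the program remains valid, and for $r > \frac{\pi}{3}$ the kissing number is $0$ or $1$ by the elementary discussion in Section~\ref{theoreticalbound-S} and needs no SDP.

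There is essentially no obstacle here: the content is entirely in Theorem~\ref{BVsdp} (proved in \cite{BachocVallentin}) and in Lemma~\ref{kappanr-s} (already proved above), and the corollary is the composition of the two. The only point requiring a line of care is the verification that $\cos\theta$ genuinely appears in the Bachoc--Vallentin program solely as the parameter $c$ in the domain constraints — that the Jacobi/Chebyshev polynomials $P^n_k$, the matrices $S^n_k$, and the positive-semidefiniteness conditions are all independent of $\theta$ — so that substituting an arbitrary $c\in[-1,1)$ is legitimate. Once that is observed, the proof is a one-sentence specialisation, exactly parallel to the derivation of Corollary~\ref{cor:bound-H}.
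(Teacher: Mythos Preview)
Your proposal is correct and matches the paper's approach exactly: the paper states the corollary without proof, treating it as the immediate composition of Theorem~\ref{BVsdp} with Lemma~\ref{kappanr-s}, parallel to Corollary~\ref{cor:bound-H}. One minor slip in your side remark on ranges: the equivalence ``$-1 \leq c < 1$ iff $\cos(2r) > 0$'' is miswritten (in fact $c \in [-1,\tfrac{1}{2}]$ precisely for $0 \leq r \leq \tfrac{\pi}{3}$), though your overall conclusion that the program is valid throughout $0 \leq r \leq \tfrac{\pi}{3}$ is correct.
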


In \cite{Dostert}, Dostert, de Laat, and Moustrou published a library for computing exact semidefinite programming bounds for packing problems. This library provides the implementation of the semidefinite program given in Theorem \ref{BVsdp}. We obtain upper bounds for the kissing number in spherical and hyperbolic space by using the function \textsf{threepointsdp(n,d,costheta)} with costheta = $1- 1/(1+\cos(2r))$ or costheta = $1- 1/(1+\cosh(2r))$. Furthermore, using this library we obtain an exact solution from the  floating point solver solution and a rigorous feasibility check. Further information about the verification script can be found in the ancillary files available on GitHub\footnote{\url{https://github.com/sashakolpakov/non-euclidean-kissing-number} \label{github}}.

Having the upper bounds from the SDPs and lower bounds from concrete configurations in Section~\ref{configurations} for any given value of $0 \leq r \leq \frac{\pi}{3}$ that are sufficiently close to each other provides us with an approximate shape of $\kappa_S(n, r)$.  

\section{Kissing configurations from spherical codes}\label{configurations}

\subsection{Configurations in $\mathbb{H}^n$} A feasible kissing configuration in dimension $n\geq 2$ for radius $r > 0$ spheres is given by a spherical code $ C \in \mathbb{S}^{n-1}$ where  $x\cdot y \leq 1-\frac{1}{1+\cosh(2r)}\text{ for all distinct } x, y \in C.$ Any feasible spherical code $C$ gives a lower bound on the kissing number, hence $\kappa_H(n,r) \geq |C|$.

Since $1-\frac{1}{1+\cosh(2r)}$ increases with $r$ increasing, any kissing configuration for radius $r$ is also a kissing configuration for $\kappa_H(n,r')$ where $r' \geq r$.

Let $x_i \in \mathbb{R}^n$, for $i = 1, \dots, k$, be the approximate numeric coordinates of the code elements in a spherical code $C \subset \mathbb{S}^{n-1}$. Here we use \cite{Sloane-et-al} as a source of putatively optimal spherical codes on $\mathbb{S}^{n-1}$, for $n = 3, 4$. We define $\tilde{x}_i \in \mathbb{Q}^n$ to be a rational approximation of $x_i$ (in many cases, $x_i$ is a real number with $16$ digit precision, and thus is already approximated by a rational). After normalizing $\tilde{x}_i$ to norm $1$, we obtain that there exist $a_i \in \mathbb{Q}$, $b_i \in \mathbb{Q}^n$ such that $\tilde{x}_i = \sqrt{a_i}\cdot b_i$. 

Using interval arithmetic in \textsf{SageMath} \cite{sage}, we compute the maximal inner product of $\tilde{x}_i$ and $\tilde{x}_j$ for $i, j \in \{1, \ldots, k\}$, $i \neq j$. Let $r \in \mathbb{R}$ be such that the maximal inner product is at most $1-\frac{1}{1+\cosh(2r)}$. Since $\tilde{x}_i \in \mathbb{S}^{n-1}$ for all $i \in \{1, \ldots, k\}$, this exact spherical code (having exact values for its elements) defines a feasible kissing configuration of $k$ radius $r$ spheres in $\mathbb{H}^n$.  Note that while turning the approximate solution into an exact kissing configuration we might have to vary $r$ slightly. The \textsf{SageMath} code converting the approximate codes from \cite{Sloane-et-al} to their rationalised forms is available on GitHub\textsuperscript{\ref{github}}.

\subsection{Configurations in $\mathbb{S}^n$} Here we use exactly the same approach as for $\mathbb{H}^n$, with the only modification that we look for codes with the maximal inner product between codewords at most $1 - \frac{1}{1+\cos(2r)}$, for $r \in \mathbb{R}$. Since $\tilde{x}_i \in \mathbb{S}^{n-1}$ for all $i \in \{1, \ldots, k\}$, this exact spherical code (having exact values for its elements) defines a feasible kissing configuration of $k$ radius $r$ spheres in $\mathbb{H}^n$. The respective data is available on GitHub\textsuperscript{\ref{github}}.

\section{Upper and lower bounds}\label{section:upper-lower-bounds}

\subsection{Hyperbolic space} In this section, we provide concrete upper bounds for the kissing function in dimensions $n = 3$ and $4$ for certain radii $r$ by using several approaches: the SDP from Corollary~\ref{cor:bound-H}, and also the theoretical upper bounds due to  Levenshtein \cite{Levenshtein} and Coxeter \cite{Bor, Coxeter, FT}. The latter are much better than the geometric upper bounds in Theorem~\ref{thm:upper-bound-h}. For solving the SDP we used $d = 10$ or $d=12$. 

Furthermore, we compare the obtained results with the theoretical lower bounds that we get from Theorem \ref{thm:lower-bound-h}, as well as from the feasible configurations in Section \ref{configurations}. The computational results are given in Table~\ref{table:H3} and Table~\ref{table:H4}.

It follows from the proof of Theorem~\ref{thm:upper-bound-h} that by rounding the theoretical upper bound we shall obtain the number of circles in the kissing configuration in dimension $n=2$.  Another observation is that the theoretical upper bounds in dimension $n=3$ keep relatively close to the SDP upper bounds, while in dimension $4$ they diverge quite quickly. 

\begin{table}[ht]
\caption{Bounds for the kissing number in $\mathbb{H}^3$}
\begin{center}
\begin{tabular}{ l  c  c  c  c  c }
\hline \vspace*{-0.8em} \\
    & theoretical & lower bound  & SDP  & Levenshtein & Coxeter \\ 
  r & lower bound & by construction & upper bound & bound & bound \vspace*{0.1em}\\  \hline \vspace*{-0.8em} \\
0 & 4 &  12 \cite{SchuetteWaerden} & 12.368591\cite{Machado}  & 13.2857 & 13.3973\\
0.3007680932244 & 4.37289 & 13 & 13.66695 & 14.6365 & 14.7591\\
0.3741678937820 & 4.58663 & 14 & 14.57930 & 15.4829 & 15.5389\\
0.4603413898301 & 4.90925 & 15 & 15.76145& 16.6843 &  16.7150\\
0.5150988762761 & 5.15856 & 16 &  16.63748& 17.5619&  17.6233\\
0.5575414271933 & 5.37771 & 17 & 17.39631 & 18.3659 & 18.4214\\
0.6117193853329 & 5.69307 & 18 & 18.57836 & 19.5957 & 19.5694\\
0.6752402229782 & 6.1184 & 19 & 20.12475 & 21.1343 &  21.1170\\
0.6839781903772 & 6.18194 & 20 & 20.43374 & 21.3570 & 21.3482\\
0.7441766799717 & 6.65554 & 21 & 21.88751  & 23.0631 &23.0705\\
0.7727858684533 & 6.90384 & 22 & 22.81495 & 24.0041 & 23.9732\\
0.8064065300517 & 7.21623 & 23 & 24.08326 & 25.2137 & 25.1087\\
0.8070321648835 & 7.22226 & 24 & 24.32215 & 25.2348 & 25.1306
\end{tabular}
\end{center}
\label{table:H3}
\end{table}

\vspace*{0.1in}

\begin{table}[ht]
\caption{Bounds for the kissing number in $\mathbb{H}^4$}
\begin{center}
\begin{tabular}{ l  c  c  c  c  c }
\hline \vspace*{-0.8em} \\
    & theoretical & lower bound  & SDP  & Levenshtein & Coxeter \\ 
  r & lower bound & by construction & upper bound &  bound &  bound \vspace*{0.1em}\\  \hline \vspace*{-0.8em} \\
0	& 5.11506& 24 \cite{Musin} & 24.05691\cite{Machado}   & 26 & 26.4420\\
0.2803065634764      & 5.70802 & 25 & 28.36959 & 29.9154 &  29.9757\\
0.2937915284847 & 5.76935 & 26 &      28.54566 & 30.2755 & 30.3417\\
0.3533981811745      & 6.08306 & 27 & 30.35228 & 32.0432 & 32.2152\\
0.4029707622959 & 6.40115 & 29 &      32.37496 & 33.8969 &  34.1172\\
0.4361470369242 & 6.64597 & 30 &      33.73058 & 35.3805 & 35.5826\\
\end{tabular}
\end{center}
\label{table:H4}
\end{table}

\subsection{Spherical space} In Table \ref{table:S3}, we give the approximate values of $r$ corresponding to the``jumps'' for the kissing number $\kappa_S(3, r)$, by using the known maximal separation distances from the solutions of Tammes' problem \cite{Danzer, FejesToth, SchuetteWaerden}.  

In Table~\ref{table:S4}, the upper bounds from SDPs and lower bounds from concrete configurations for a few values of $r \in [0, \pi/3]$ provide us with an approximate shape of $\kappa_S(4, r)$. Here, we consider the spherical codes for $0 \leq r \leq \pi/3$ determined by the approach of Section \ref{configurations}. For each radius $r$ of these exact spherical codes, we compute the lower bound given by Theorem \ref{thm:lower-bound-s}, as well as the upper bound by the SDP from Corollary \ref{cor:bound-S} together with the ones due to Levenshtein \cite{Levenshtein} and Coxeter \cite{Bor, Coxeter, FT}. In order to obtain the given SDP upper bounds we use $d=8$.

\begin{table}[ht]
\caption{Jumps of $\kappa_S(3, r)$. Here we put $\kappa_S(3, 0) = \lim_{r\to 0} \kappa_S(3, r) =  12$.}
\begin{center}
\begin{tabular}{ c | c | c}
\hline \vspace*{-0.8em} \\
  Jump $n \to m$ & Exact value of $r$ & Approximate value of $r$\vspace*{0.1em}\\  \hline \vspace*{-0.8em} \\
$12 \to 10$ & $\pi / 10$ & 0.3141592653590 \\
$10 \to 9$  & $\arccos \tau$, with $16 \tau^6 - 44 \tau^4 + 34 \tau^2 - 7 = 0$ & 0.4122234203273 \\
$9 \to 8$ & $\pi / 6$ & 0.5235987755983 \\ 
$8 \to 7$ & $\frac{1}{2} \mathrm{arcsec}(2\sqrt{2})$ & 0.6047146014441 \\
$7 \to 6$ & $\frac{\pi}{4} + \frac{1}{2} \mathrm{arccsc}\left(2 - \csc \frac{\pi}{18}\right)$ & 0.6507545374483 \\
$6 \to 4$ & $\pi / 4$ & 0.7853981633974 \\
$4 \to 3$ & $\frac{1}{2} \arccos\left( -\frac{1}{4} \right)$ & 0.9117382909685 \\
$3 \to 2$ & $\frac{1}{2} \arccos\left( -\frac{1}{3} \right)$ & 0.9553166181245\\
$2 \to 1$ & $\pi / 3$ & 1.0471975511966 \\
$1 \to 0$ & $\pi$ & 3.1415926535898
\end{tabular}
\end{center}
\label{table:S3}
\end{table}

In Table~\ref{table:S4}, some of the smaller values of the Coxeter upper bound cannot be computed due to the properties of the Schl\"afli function: it assumes that we have enough spheres to put them in the vertices of a regular $n$--simplex. Thus, for large $r$, we have too few points, and this bound is not applicable. 

\begin{table}[ht]
\caption{Bounds for the kissing number in $\mathbb{S}^4$ }
\begin{center}
\begin{tabular}{ l  c  c  c  c  c}
\hline \vspace*{-0.8em} \\
    & theoretical & lower bound  & SDP  & Levenshtein  & Coxeter\\ 
  r & lower bound & by construction & upper bound &  bound & bound \vspace*{0.1em}\\  \hline \vspace*{-0.8em} \\
$0$ & $ 5.11506 $& $24$~\cite{Musin} & $24.056903$~\cite{Machado}    & $  26 $ & 26.4420\\
$0.064960281031$& $5.0847 $& $ 22$& $ 24.25996 $   & $ 25.8154$ & 26.2614\\
$0.135$& $ 4.98499 $& $ 21$& $ 23.698995 $   & $  25.2181 $ & 25.6681\\
$0.2348312007464$& $4.72978 $& $ 21$& $ 22.343847 $   & $ 23.7439 $ &  24.1511\\
$0.315$ & $ 4.43922 $& $ 20$& $ 20.975086 $   & $  22.1389 $&  22.4263\\
$0.3478604258810$& $4.30116 $& $ 20$& $20.418654  $   & $ 21.3944 $ & 21.6076\\
$0.3743605576995$& $ 4.18278$& $ 18$& $ 20.039183 $   & $ 20.7611 $ & 20.9061\\
$0.393$& $ 4.09608 $& $ 17$& $ 19.493801$   & $ 20.2984$ & 20.3925\\
$0.3966966954949$& $4.07857 $& $ 17$& $ 19.336889 $   & $ 20.2050 $ & 20.2888\\
$0.439$& $ 3.87137 $& $ 16$& $ 17.528082$   & $ 18.7761  $ & 19.0626\\
$0.44269036900123$& $ 3.85274$& $ 16$& $ 17.387671 $   & $ 18.6476 $ & 18.9525\\
$0.49$& $  3.60742 $& $ 15$& $15.92363 $   & $  17.0608$ & 17.5025\\
$0.49969620570817$& $ 3.55583$& $ 15$& $ 15.650850 $   & $ 16.7492  $ & 17.1977\\
  $0.53$& $  3.3923 $& $ 14$& $ 14.877753$   & $  15.8038  $ & 16.2314\\
$0.54100885503509$& $ 3.33217$& $ 14$& $  14.632380$   & $ 15.4706 $ & 15.8766\\
$0.55183$& $ 3.27277 $& $ 13$& $ 14.402314$   & $ 15.1480  $ & 15.5262\\
$0.55558271937072$& $3.2521 $& $ 13$& $ 14.313536 $   & $ 15.0373 $ & 15.4043\\
$0.595$& $ 3.03363  $& $ 12$& $12.970691 $   & $  13.8506$ & 14.1160\\
$0.61547970865277$& $ 2.91955$& $ 12$& $ 12.302214 $   & $ 13 + 7\cdot 10^{-10}  $ & 13.4436\\
$0.6299$& $ 2.8392 $& $ 11$& $ 11.902489$   & $ 12.4471  $ & 12.9700\\
$0.63337378793619$& $ 2.81986$& $ 11$& $ 11.780786 $   & $  12.3190$ & 12.8560\\
$0.653$& $  2.71075  $& $ 10$& $ 10.99092$   & $  11.6302  $ & 12.2128\\
$0.68471920300192$& $ 2.53556$& $ 10$& $ 10.000004 $   & $ 10.6250 $ & 11.17937\\
\end{tabular}
\end{center}
\label{table:S4}
\end{table}

\begin{table}[ht]
\caption*{\textsc{Table 4 (continued)}. Bounds for the kissing number in $\mathbb{S}^4$ }
\begin{center}
\begin{tabular}{ l  c  c  c  c  c}
\hline \vspace*{-0.8em} \\
    & theoretical & lower bound  & SDP  & Levenshtein  & Coxeter\\ 
  r & lower bound & by construction & upper bound &  bound & bound \vspace*{0.1em}\\  \hline \vspace*{-0.8em} \\
$0.6847193$& $2.53556  $& $ 9$& $ 9.99999994 $   & $  10.624998 $ & 11.17936\\
$0.68811601660265$& $ 2.51692$& $ 9$& $  9.8530813$   & $ 10.5242 $ & 11.0693\\
$0.71$& $  2.3976 $& $ 8$& $ 8.9684325$   & $9.9017 $ & 10.3642\\
$0.785398163397449$& $ 2$& $ 8$& $8.0000293  $   & $ 8 - 10^{-14} $ & 8.00008\\
$0.78539828$& $ 2$& $ 5$& $7.9999982 $   & $ 7.999994 $ & 8.00009\\
$0.88607712356268$& $  1.52096$& $ 5$& $ 5.008075 $   & $5 + 5\cdot 10^{-9} $& 5.00003\\
$0.9$& $ 1.46106 $& $ 4$& $ 4.5958861$   & $ 4.4014 $ & 4.5586\\
$0.91173828638360$& $ 1.4121  $& $ 4$& $ 4.0000002 $   & $ 4 + 2\cdot 10^{-7} $ & 4.0002\\
$0.9206$& $ 1.37591  $& $ 3$& $3.74363 $   & $ 3.7436$ & $*$\\
$0.95531661577188$& $  1.2431  $& $ 3$& $ 3+5\cdot 10^{-8} $   & $3 + 4 \cdot 10^{-8}$ & $*$\\
$\pi/3$& $ 1 $& $ 2$& $ 2+10^{-15}$   & $ 2 $ & $*$
\end{tabular}
\end{center}
\end{table}

\section{Average kissing number}\label{section:avg-kissing-number}

Just as in the Euclidean space $\mathbb{R}^n$, one may define the average kissing number of $\mathbb{X}^n = \mathbb{H}^n$ or $\mathbb{S}^n$, for $n\geq 2$. Let $S_1$, $S_2$, $\ldots$, $S_k$ be a collection of spheres in $\mathbb{X}^n$ with disjoint interiors. Let $\mathcal{G} = \mathcal{G}(S_1, \ldots, S_k)$ be its \textit{contact graph}: the vertices of $\mathcal{G}$ are $S_1$, $\ldots$, $S_k$, while there is an edge connecting $S_i$ to $S_j$, $1 \leq i\neq j \leq k$, whenever $S_i$ and $S_j$ come in contact. Let $V = V(\mathcal{G})$ be the vertices of $\mathcal{G}$, and $E = E(\mathcal{G})$ be its edges. Then, the average kissing number is 
\begin{equation*}
\kappa^{avg}_X(n) = \sup_{\mathcal{G}} \,\, \frac{2 |E(\mathcal{G})|}{|V(\mathcal{G})|},
\end{equation*}
where $X = H$ or $S$, depending on whether we consider $\mathbb{H}^n$ or $\mathbb{S}^n$, and the supremum is taken over all possible contact graphs $\mathcal{G}$ for the sphere configurations in $\mathbb{X}^n$. We put no subscript for the Euclidean average kissing number $\kappa^{avg}(n)$ in $\mathbb{R}^n$. 

By using the stereographic projection, it is possible to project a sphere configuration in $\mathbb{S}^n$ to a sphere configuration in $\mathbb{R}^n$, and back. Here, the only condition that needs to be satisfied is that the ``North Pole'' of the projection (the point of $\mathbb{S}^n$ that has no image in $\mathbb{R}^n$) does not belong to any sphere or its interior. This means that the contact graphs in $\mathbb{R}^n$ and in $\mathbb{S}^n$ are in bijective correspondence. 

Analogously, any sphere configuration in $\mathbb{R}^n$ can be placed into the upper half-space model for $\mathbb{H}^n$, and thus it will give a sphere configuration in the latter (since spheres in $\mathbb{H}^n$ can be described by quadratic equations as Euclidean spheres, although their ``hyperbolic'' and ``Euclidean'' centres do not generally coincide). On the other hand, a sphere configuration in the upper half-space model of $\mathbb{H}^n$ can be interpreted as a sphere configuration in a half-space of $\mathbb{R}^n$. Thus, the contact graphs in $\mathbb{R}^n$ and in $\mathbb{H}^n$ are also in bijective correspondence. 

From the above noted correspondences, it follows that 
\begin{equation*}
\kappa^{avg}_H(n) = \kappa^{avg}_S(n) = \kappa^{avg}(n),
\end{equation*}
for all $n\geq 2$. For the lower and upper bounds on $\kappa^{avg}(n)$, we refer the reader to \cite{DKO} and the references therein. 

\FloatBarrier

\end{document}